\apptocmd{\sloppy}{\hbadness 10000\relax}{}{}
\numberwithin{equation}{section}
\newtheorem{thm}[equation]{Theorem}
\newtheorem{prop}[equation]{Proposition}
\newtheorem{lemma}[equation]{Lemma}
\newtheorem{cor}[equation]{Corollary}
\theoremstyle{definition}
\newtheorem{rmk}[equation]{Remark}
\newtheorem{defn}[equation]{Definition}
\newcommand{\F}{\mathbb{F}}
\newcommand{\bP}{\mathbb{P}}
\newcommand{\Z}{\mathbb{Z}}
\DeclareMathOperator{\charp}{char}
\DeclareMathOperator{\GL}{GL}
\newcommand{\mybar}[1]{#1\llap{$\overline{\phantom{\rm#1}}$}}
\begin{document}

\title[Complete mappings of certain forms]{Determination of all complete mappings of $\F_{q^2}$ of the form $aX^{3q}+bX^{2q+1}+cX^{q+2}+dX^3$}

\author{Zhiguo Ding}
\address{
  School of Mathematics and Statistics, 
  Hunan Normal University, 
  Changsha 410081, China
  }
\email{ding8191@qq.com}

\author{Wei Xiong}
\address{
  School of Mathematics,
  Hunan University,
  Changsha 410082, China
}
\email{weixiong@amss.ac.cn}

\author{Michael E. Zieve}
\address{
  Department of Mathematics,
  University of Michigan,
  530 Church Street,
  Ann Arbor, MI 48109-1043 USA
}
\email{zieve@umich.edu}
\urladdr{https://dept.math.lsa.umich.edu/$\sim$zieve/}

\thanks{The second author was supported in part by the Natural Science Foundation of Hunan Province of China (No.\ 2020JJ4164).  The third author was supported in part by Simons Travel Grant MPS-TSM-00007931.}

\date{\today}

\begin{abstract}
For each prime power $q$, we determine all polynomials over $\F_{q^2}$ of the form $f(X)\colonequals aX^{3q}+bX^{2q+1}+cX^{q+2}+dX^3$ which induce complete mappings of $\F_{q^2}$, in the sense that each of the functions $x\mapsto f(x)$ and $x\mapsto f(x)+x$ permutes $\F_{q^2}$. 
This is the first result in the literature which classifies the complete mappings among some class of polynomials with arbitrarily large degree over finite fields of arbitrary characteristic.
We also determine all permutation polynomials over $\F_{q^2}$ of the form  $X^{q+2}+bX^q+cX$, and all permutations of $\F_q\times\F_q$ induced by maps of the form $(x,y)\mapsto (x^3-exy^2-ax-by,y^3-cx-dy)$ where either $e=0$ or $3\mid q$.
The latter results add to the small number of results in the literature classifying all permutations induced by maps of prescribed forms.
\end{abstract}

\maketitle


\section{Introduction}

A \emph{complete mapping} of a group $G$ is a permutation $\pi$ of $G$ for which the function $g\mapsto \pi(g)\cdot g$ permutes $G$. Complete mappings were introduced by Mann in his work on constructing orthogonal Latin squares \cite{Mann}. 
They have been used in various ways in cryptography \cite{Mitt1,Mitt2,SV,V}, coding theory \cite{SW}, and in the construction of quasigroups \cite{MM}. By a complete mapping of a field or of a vector space, we mean a complete mapping of the additive group of the relevant object.

There are only a few known classes of complete mappings of finite fields $\F_q$. Most of these come from one of two sources, namely additive homomorphisms of $\F_q$ or functions acting as scalar multiples on each coset of $\F_q^*/H$ for some low-index subgroup $H$ of $\F_q^*$. There are simple conditions determining which functions of these two types are complete mappings.

In this paper we prove the first result classifying all complete mappings over finite fields of arbitrary characteristic among some class of polynomials other than the two just described. 
We give two descriptions of the relevant complete mappings, one providing simple representatives up to a natural equivalence relation, and one providing explicit conditions on the coefficients.

We now define the equivalence relation we will use. We say that $f,g\in\F_{q^2}[X]$ are \emph{$\F_q$-linearly conjugate} if the induced functions on $\F_{q^2}$ satisfy $f=\rho^{-1}\circ g\circ\rho$ for some $\F_q$-vector space automorphism $\rho$ of $\F_{q^2}$.
Equivalently, there exist $a,b\in\F_{q^2}$ such that $a^{q+1}\ne b^{q+1}$ and the polynomials $L(X)\colonequals aX^q+bX$ and $L^{-1}(X)\colonequals (aX^q-b^qX)/(a^{q+1}-b^{q+1})$ satisfy $f(X)\equiv L^{-1}(X)\circ g(X)\circ L(X)\pmod{X^{q^2}-X}$.

Our main result is as follows.

\begin{thm}\label{cm-bb-deg3}
Let $q$ be a power of a prime $p$, and pick any $a,b,c,d\in\F_{q^2}$.
Then $f(X) \colonequals aX^{3q} + bX^{2q+1} + cX^{q+2} + d X^3$ is a complete mapping of\/ $\F_{q^2}$ if and only if one of the following holds:
\renewcommand{\theenumi}{\thethm.\arabic{enumi}}
\renewcommand{\labelenumi}{(\thethm.\arabic{enumi})}
\begin{enumerate}
\item\label{cm1} $f(X)$ is\/ $\F_q$-linearly conjugate to $\gamma X^{q+2}$ for some $\gamma\in\F_{q^2}^*$ with $\gamma^{2q-2}-\gamma^{q-1}+1=0$;
\item\label{cmadd} $q\equiv 0\pmod 3$ and $f(X)=aX^{3q}+dX^3$ where $a^{q+1}\ne d^{q+1}$ and $aX^{3q-1}+dX^2+1$ has no roots in\/ $\F_{q^2}^*$.
\end{enumerate}
\end{thm}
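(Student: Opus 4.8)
The plan is to exploit the $\F_q$-homogeneity of degree $3$ enjoyed by $f$: since every exponent $iq+j$ occurring in $f$ satisfies $i+j=3$, we have $f(\lambda x)=\lambda^3 f(x)$ for all $\lambda\in\F_q$. Writing $t\colonequals X^{q-1}\in\mu_{q+1}$, one gets $f(X)=X^3 g(X^{q-1})$ with $g(t)\colonequals at^3+bt^2+ct+d$, so $f$ scales each $\F_q$-line through $0$ by $g(t)$ and permutes the lines $\F_{q^2}^*/\F_q^*\cong\mu_{q+1}$ via the rational function $R(t)\colonequals\tilde g(t)/g(t)$, where $\tilde g(t)=d^qt^3+c^qt^2+b^qt+a^q$; a short computation gives $R(\mu_{q+1})\subseteq\mu_{q+1}$, with the symmetry $R(t)^q=1/R(t)$. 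By the standard criterion for $X^rh(X^{(q^2-1)/n})$ (with $n=q+1$, $r=3$), which I would invoke or reprove, $f$ permutes $\F_{q^2}$ if and only if $q\not\equiv 1\pmod 3$, the cubic $g$ has no root on $\mu_{q+1}$, and $R$ permutes $\mu_{q+1}$. I would also record that the complete mapping property and the shape $aX^{3q}+bX^{2q+1}+cX^{q+2}+dX^3$ are both preserved under $\F_q$-linear conjugacy, since conjugating by an $\F_q$-linear $\rho$ fixes $\Id$ and hence carries the pair $(f,f+X)$ to $(\rho^{-1}f\rho,\rho^{-1}f\rho+X)$.

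The first main task is to determine all $g$ for which $R$ permutes $\mu_{q+1}$. Because conjugacy acts on the associated binary cubic form $dX^3+cX^2X^q+bX(X^q)^2+a(X^q)^3$ through a copy of $\GL_2(\F_q)$, I would classify these forms up to conjugacy by the factorization type of their roots in $\bP^1(\overline{\F_q})$ together with the twisted Frobenius $[u:v]\mapsto[v^q:u^q]$, choose a simple representative in each orbit, and test the condition on $R$. The quantitative heart is that a degree-$3$ rational function can permute the $(q+1)$-element set $\mu_{q+1}$ only under severe restrictions; for large $q$ I would enforce this through an exceptionality/Hasse--Weil argument and finish with a finite check for small $q$. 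I expect this to reduce the surviving forms to exactly two families: those with a repeated root, which are conjugate to $\gamma X^{q+2}$ (form $\gamma X^q X^2$), and, when $\charp(\F_q)=3$, the perfect cubes $f=L(X)^3$ of an $\F_q$-linear $L(X)=\beta X^q+\alpha X$, which are precisely the additive polynomials $aX^{3q}+dX^3$.

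It then remains to impose that $f+X$ also permute $\F_{q^2}$; this breaks the homogeneity and produces the extra conditions. For $f=\gamma X^{q+2}$ I would factor $f(x)+x=(\gamma x^{q+1}+1)x$ and stratify $\F_{q^2}^*$ by the norm $s\colonequals x^{q+1}\in\F_q^*$: on the sphere of norm $s$, the map $f+X$ is multiplication by $\gamma s+1$ onto the sphere of norm $M(s)=(\gamma s+1)^{q+1}s=\gamma^{q+1}s^3+\Tr(\gamma)s^2+s$, so $f+X$ permutes $\F_{q^2}$ iff the cubic $M$ permutes $\F_q$. Completing the cube (legitimate since $q\not\equiv 0\pmod 3$ in this case) and using that the binary quadratic form $u^2+uv+v^2$ is universal on $\F_q^*$, one finds $M$ is a permutation iff its linear coefficient vanishes, i.e. iff $\Tr(\gamma)^2=3\gamma^{q+1}$; setting $\delta=\gamma^{q-1}$ this reads $\delta^2-\delta+1=0$, which is \eqref{cm1}. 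For the characteristic-$3$ additive family $f=aX^{3q}+dX^3$, both $f$ and $f+X$ are additive, so the complete mapping property is equivalent to both having trivial kernel: $f$ is injective iff $a^{q+1}\ne d^{q+1}$, and $f+X=X(aX^{3q-1}+dX^2+1)$ is injective iff $aX^{3q-1}+dX^2+1$ has no root in $\F_{q^2}^*$, which is exactly \eqref{cmadd}.

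The decisive obstacle is the classification of which forms $g$ make $R$ permute $\mu_{q+1}$: ruling out generic degree-$3$ behaviour needs either exceptionality plus a finite computation, or a direct combinatorial analysis of $R$ exploiting $R(t)^q=1/R(t)$. An alternative route, which simultaneously produces the companion results of the abstract, is to fix an $\F_q$-basis of $\F_{q^2}$ and read $f$ as a pair of cubic forms on $\F_q\times\F_q$ and $f+X$ as those cubics plus the identity linear map; after normalizing the cubic part by conjugacy to $(x,y)\mapsto(x^3-exy^2,\,y^3)$, the complete mapping property becomes the simultaneous permutation of $(x^3-exy^2,y^3)$ and $(x^3-exy^2-ax-by,\,y^3-cx-dy)$, governed by the paper's classification of such planar maps. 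Reconciling the $\mu_{q+1}$ and $\F_q\times\F_q$ viewpoints, handling the characteristic $2$ and $3$ separately, and tracking the normalization back to explicit conditions on $(a,b,c,d)$ constitute the most delicate bookkeeping in the argument.
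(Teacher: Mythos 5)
Your setup and the two families you do analyze are handled correctly: the reduction $f(X)=X^3B(X^{q-1})$ to a degree-$\le 3$ rational function on $\mu_{q+1}$ is exactly how the paper opens its proof of Theorem~\ref{bb-deg3}; your observation that conjugation by an $\F_q$-linear $\rho$ carries $(f,f+X)$ to $(\rho^{-1}f\rho,\rho^{-1}f\rho+X)$ is used in the same way; and your norm-sphere analysis of $\gamma X^{q+2}+X$, reducing completeness to whether $M(s)=\gamma^{q+1}s^3+\Tr(\gamma)s^2+s$ permutes $\F_q$, is a clean self-contained alternative to the paper's route, which instead cites \cite[Cor.~3.4]{Z-subfields} for the ``if'' direction and invokes the Hermite-criterion classification of permutations $X^{q+2}+bX^q+cX$ (Theorem~\ref{first}) for the ``only if'' direction. (One small slip there: condition \eqref{cm1} does not exclude $3\mid q$ --- in characteristic $3$ it degenerates to $\gamma^{q-1}=-1$ --- so you cannot always complete the cube; that subcase must be finished with Lemma~\ref{deg3} directly, and it does give the same condition.)

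The genuine gap is your claimed classification of the permutations $f$ in the second paragraph. It is not true that the surviving forms are exactly the repeated-root forms (conjugate to $\gamma X^{q+2}$) together with the characteristic-$3$ perfect cubes. Theorem~\ref{bb-deg3} shows that, up to $\F_q$-linear equivalence, there are three families: $X^{q+2}$; $(X^3,Y^3)$ for all $q\not\equiv 1\pmod 3$, which for $p\ne 3$ is neither additive nor of repeated-root type (the associated cubic form is $\beta L_1^3+\alpha L_2^3$, and one checks that $\gcd(B,\widehat B)$ --- which detects the repeated-root family --- is an invariant of the equivalence); and $(X^3-eXY^2,Y^3)$ with $e$ a nonsquare when $3\mid q$, which is entirely absent from your list. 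Ruling out complete mappings arising from these two omitted families is where the bulk of the paper's work lies: it requires Theorem~\ref{second}, which classifies when $(X^3-aX-bY,Y^3-cX-dY)$ permutes $\F_q\times\F_q$ (via geometric monodromy groups, double transitivity, and Weil's bound), and Theorem~\ref{third}, which shows $(X^3-eXY^2-aX-bY,Y^3-cX-dY)$ never permutes when the linear part is invertible and which in turn rests on Proposition~\ref{newlem} and an intricate base-$3$ Hermite's-criterion computation. Your closing paragraph mentions this planar reduction only as an ``alternative route,'' but it is not optional: without an argument covering these families, the ``only if'' direction of the theorem is not established.
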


\begin{rmk}
The complete mappings $\gamma X^{q+2}$ in \eqref{cm1} first appeared in \cite[Cor.~2.3]{Z-subfields}. A slightly weaker version of the special case $p=2$ of Theorem~\ref{cm-bb-deg3} was proved in \cite{CDLXXZ}.
\end{rmk}

The complete mappings in \eqref{cmadd} belong to the well-known class of additive polynomials (which are sometimes called linearized polynomials or $p$-polynomials), namely, polynomials of the form $\sum_{i=0}^m a_i X^{p^i}$ where $p$ is the characteristic of $\F_q$.
Such a polynomial $f(X)$ is a complete mapping of $\F_{q^2}$ if and only if both $f(X)$ and $f(X)+X$ have no roots in $\F_{q^2}^*$.
We now determine explicit necessary and sufficient conditions on the coefficients of the non-additive complete mappings $f(X)$ in Theorem~\ref{cm-bb-deg3}.

\begin{thm}\label{cmthm}
For any prime power $q$, and any $a,b,c,d\in\F_{q^2}$, the polynomial $f(X)\colonequals aX^{3q}+bX^{2q+1}+cX^{q+2}+dX^3$ is a complete mapping on\/ $\F_{q^2}$ if and only if $q\not\equiv 1\pmod 3$ and either \eqref{cmadd} holds or one of the following holds:
\renewcommand{\theenumi}{\thethm.\arabic{enumi}}
\renewcommand{\labelenumi}{(\thethm.\arabic{enumi})}
\begin{enumerate}
\item \label{30} $a=b=d=0$ and $c^{2q-2}-c^{q-1}+1=0$; or
\item \label{31} all of these hold:
\begin{itemize}
\item $3ac = b^2\ne 9d^{2q}$,
\item $144a^{q+3} = -(b^2+3d^{2q})^2$,
\item $24a^2d = (b+d^q)(b^2+3d^{2q}) \ne 0$,
\item $24a^2b^q = -(b-3d^q)(b^2+3d^{2q})$; or
\end{itemize}
\item\label{34} $3\mid q$ and all of these hold:
\begin{itemize}
\item $b=d=0$,
\item $c^{q-1}=-1$,
\item $(-a/c)^{(q+1)/2}=-1$; or
\end{itemize}
\item\label{35} $3\mid q$ and all of these hold:
\begin{itemize}
\item $b=0$,
\item $a^{q-1} d^{2q-2} = -1$,
\item $d^{4q+4}+a^4 d^{q+5}$ is a square in\/ $\F_q^*$,
\item $acd^q+a^2d+d^{3q} = 0$; or
\end{itemize}
\item\label{32} $2\mid q$ and all of these hold:
\begin{itemize}
\item $a^{q+1} = c^{2q}+c^{q+1}+c^2$,
\item $ac=b^2$,
\item $d=b^q$,
\item $c\notin\F_q$; or
\end{itemize}
\item\label{33} $q=2$, $a=d$, $b=0$, and $c\in\F_4\setminus\F_2$.
\end{enumerate}
\end{thm}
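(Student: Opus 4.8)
Since condition~\eqref{cmadd} is already an explicit set of conditions on $a,b,c,d$, Theorem~\ref{cm-bb-deg3} reduces the problem to deciding, purely in terms of $a,b,c,d$, when $f$ is $\F_q$-linearly conjugate to some $\gamma X^{q+2}$ with $\gamma^{2q-2}-\gamma^{q-1}+1=0$. The plan is first to pin down when such a $\gamma$ exists. Writing $s\colonequals\gamma^{q-1}$, which ranges over the group $\mu_{q+1}$ of $(q+1)$st roots of unity as $\gamma$ ranges over $\F_{q^2}^*$, the scalar condition becomes $s^2-s+1=0$. A short case analysis on $\charp$ --- the roots are primitive sixth roots of unity when $\charp>3$, primitive cube roots of unity when $\charp=2$, and $s=-1$ when $\charp=3$ --- shows that a root lies in $\mu_{q+1}$ exactly when $q\not\equiv 1\pmod 3$. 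This yields the global hypothesis of the theorem: when $q\equiv 1\pmod 3$ there is no non-additive complete mapping of this form, and since \eqref{cmadd} forces $3\mid q$, there is then no complete mapping at all.

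Next I would make the conjugacy class explicit. Every conjugate of $\gamma X^{q+2}$ has the form $g=L^{-1}\circ(\gamma X^{q+2})\circ L$ with $L(X)=\alpha X^q+\beta X$ and $\alpha^{q+1}\ne\beta^{q+1}$. Expanding $\gamma\,L(X)^{q+2}=\gamma\,L(X)^q L(X)^2$ and then applying $L^{-1}(T)=(\alpha T^q-\beta^q T)/(\alpha^{q+1}-\beta^{q+1})$, one reduces $g$ modulo $X^{q^2}-X$ to the four-term shape $aX^{3q}+bX^{2q+1}+cX^{q+2}+dX^3$; here it is useful that $T\mapsto T^q$ swaps the monomials $X^{3q}\leftrightarrow X^3$ and $X^{2q+1}\leftrightarrow X^{q+2}$, so this family is preserved. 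This expresses $(a,b,c,d)$ as explicit functions of $(\alpha,\beta,\gamma)$, and the theorem becomes the assertion that a tuple $(a,b,c,d)$ lies in the image of this parametrization for some admissible $(\alpha,\beta,\gamma)$ if and only if it satisfies one of \eqref{30}--\eqref{33}. Proving necessity and sufficiency simultaneously thus amounts to computing the image of the parametrization map.

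To organize this elimination it helps to use the dictionary $f(X)=F(X^q,X)$ with the binary cubic form $F(Y,Z)=aY^3+bY^2Z+cYZ^2+dZ^3$, under which $\F_q$-linear conjugation becomes a Frobenius-twisted action of $\GL_2(\F_q)$ and the representative corresponds to $\gamma YZ^2$, a form with one simple and one double root. The first, characteristic-free requirement is therefore that $F$ have a repeated but not triple root, i.e.\ that the discriminant $b^2c^2-4ac^3-4b^3d+18abcd-27a^2d^2$ vanish while the Hessian $(b^2-3ac)Y^2+(bc-9ad)YZ+(c^2-3bd)Z^2$ does not vanish identically. When $\charp>3$ this, together with the Frobenius-compatibility of the double root with the pair $(X^q,X)$ and the scalar constraint $\gamma^{2q-2}-\gamma^{q-1}+1=0$, is exactly what the equations in \eqref{31} record; the degenerate orbit representative $a=b=d=0$ gives \eqref{30}.

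Finally I would treat the small characteristics, which is where I expect the real work to lie. The discriminant and the Hessian degenerate when $\charp\in\{2,3\}$, and the strict inequality $b^2\ne 9d^{2q}$ appearing in \eqref{31} can no longer be met there (in characteristic $3$ it forces $b\ne 0$ while $3ac=b^2$ forces $b=0$, and in characteristic $2$ the companion relations force $b=d^q$, i.e.\ $b^2=d^{2q}$), so \eqref{31} becomes vacuous and the image must be recomputed by hand. This produces \eqref{34}--\eqref{35} for $3\mid q$ and \eqref{32} for $2\mid q$, with the single remaining field $q=2$ checked directly to obtain \eqref{33}. The main obstacle is precisely this small-characteristic analysis together with the delicate arithmetic of the resulting conditions --- for instance $(-a/c)^{(q+1)/2}=-1$, $a^{q-1}d^{2q-2}=-1$, and ``$d^{4q+4}+a^4d^{q+5}$ is a square in $\F_q^*$'' --- each of which requires tracking which elements are squares, cubes, or norms as one passes between $\F_{q^2}$ and $\F_q$, and verifying that the admissible scalars $\gamma$ match up with coefficient tuples without creating spurious solutions.
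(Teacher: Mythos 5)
Your overall reduction is the same as the paper's: invoke Theorem~\ref{cm-bb-deg3}, observe that a $\gamma$ with $\gamma^{2q-2}-\gamma^{q-1}+1=0$ exists in $\F_{q^2}^*$ exactly when $q\not\equiv 1\pmod 3$, write out $L^{-1}\circ\gamma X^{q+2}\circ L$ for $L(X)=\lambda X^q+\beta X$ to get an explicit parametrization of $(a,b,c,d)$ by $(\lambda,\beta,\gamma)$ (using, for $q>2$, that the four monomials are pairwise incongruent mod $X^{q^2}-X$ so coefficients can be compared), and then characterize the image of that parametrization. Your preliminary observations are all correct, including the reasons \eqref{31} is vacuous in characteristics $2$ and $3$. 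The problem is that everything after the parametrization is deferred: the entire content of the theorem is the verification that the image is cut out by precisely the conditions \eqref{30}--\eqref{33}, in both directions, and you do not carry out any of it. In the paper this occupies the bulk of the proof and consists of explicit eliminations and explicit constructions of $(\lambda,\beta,\gamma)$ from $(a,b,c,d)$ in each case (for instance, given \eqref{31} one must produce $\delta=-\omega^2(b+3d^q)\bigl(b+(1+2\omega)d^q\bigr)/(12a\gamma)$, show it lies in $\F_q^*$, and solve $\beta^{q+1}=\delta$; given \eqref{35} one must exhibit a square root of $d^{4q+4}+a^4d^{q+5}$ as a rational expression in $\lambda,\beta,\gamma$). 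Acknowledging that this is ``the main obstacle'' does not discharge it.

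I would also caution that your proposed organizing device --- viewing $f(X)=F(X^q,X)$ for a binary cubic $F$ and using the discriminant and Hessian --- does not deliver \eqref{31} as directly as you suggest. Having a double-but-not-triple root is only a small part of membership in the orbit of $\gamma YZ^2$: the substitution is not an arbitrary element of $\GL_2$ but the Frobenius-twisted pair $(Y,Z)\mapsto(\lambda Y+\beta Z,\,\lambda^qZ+\beta^qY)$ up to the correct identification, and the scalar must satisfy the norm-type condition on $\gamma^{q-1}$. The conditions in \eqref{31} are not the discriminant and Hessian of $F$: the relation $3ac=b^2$ says only that the $Y^2$-coefficient of the Hessian vanishes, and the remaining relations such as $144a^{q+3}=-(b^2+3d^{2q})^2$ and $24a^2d=(b+d^q)(b^2+3d^{2q})$ mix coefficients with their $q$-th powers and encode the Frobenius compatibility and the constraint on $\gamma$. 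Extracting these from covariant-theoretic data, and then proving the converse (that every tuple satisfying them actually arises from admissible $(\lambda,\beta,\gamma)$), is exactly the computation the paper performs by direct elimination; your sketch contains no mechanism for producing it.
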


Our proof of Theorem~\ref{cm-bb-deg3} uses a wide range of tools, including some intricate applications of Hermite's criterion, Weil's bound, results about primitive and doubly transitive permutation groups, and knowledge of all low-degree permutation rational functions. Our proof proceeds by first proving the following classifications of bijections of certain forms, which are of independent interest.

\begin{thm}\label{first}
For any prime power $q$, and any $b,c\in\F_{q^2}$, the polynomial $f(X)\colonequals X^{q+2}+bX^q+cX$ permutes\/ $\F_{q^2}$ if and only if one of the following holds:
\renewcommand{\theenumi}{\thethm.\arabic{enumi}}
\renewcommand{\labelenumi}{(\thethm.\arabic{enumi})}
\begin{enumerate}
\item\label{11} $q\not\equiv 1\pmod 3$, $b=0$, and $c^{q-1}$ is a root of $X^3-X^2+X$;
\item\label{12} $q=2$, $b\ne 0$, and $c=1$.
\end{enumerate}
\end{thm}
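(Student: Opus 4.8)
The plan is to prove injectivity of the induced map on $\F_{q^2}$ (equivalent to being a permutation, since the set is finite), splitting the analysis according to whether $b=0$ or $b\ne 0$. The key structural observation is that for $x\ne 0$ one may write $s\colonequals x^{q+1}=N(x)\in\F_q^*$ and $u\colonequals x^{q-1}\in\mu_{q+1}$; then, since $x^{q+2}=sx$ and $x^q=ux$, one gets the ``polar'' form $f(x)=x\,(s+bu+c)$. When $b=0$ this map is $\mu_{q+1}$-equivariant, so it interacts cleanly with the norm fibration and can be settled completely; when $b\ne 0$ the equivariance breaks, and this is where the real work lies.

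For $b=0$ we have $f(x)=x(x^{q+1}+c)$, so $N(f(x))=s\,(s+c)(s+c^q)=\phi(s)$ where $\phi(X)\colonequals X(X+c)(X+c^q)=X^3+\Tr(c)X^2+N(c)X\in\F_q[X]$, while on each fibre $N^{-1}(s)$ the map is $x\mapsto(s+c)x$. Hence $f$ permutes $\F_{q^2}$ iff $\phi$ permutes $\F_q$ and $s+c\ne 0$ for every $s\in\F_q^*$ (injectivity on fibres); the latter just says $c\notin\F_q^*$. It then remains to decide when the cubic $\phi$ permutes $\F_q$. For $p\ne 3$ one completes the cube: $\phi$ is equivalent to $Y^3+PY$ with $P=N(c)-\Tr(c)^2/3$, and a short computation shows $Y^3+PY$ permutes $\F_q$ iff $P=0$ and $q\equiv 2\pmod 3$; rearranging $P=0$ gives $c^{2q-2}-c^{q-1}+1=0$ (with $c=0$ covering the root $0$), i.e.\ $c^{q-1}$ is a root of $X^3-X^2+X$. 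For $p=3$ I would argue directly from $\phi(X)-\phi(Y)=(X-Y)\bigl[(X-Y)^2+\Tr(c)(X+Y)+N(c)\bigr]$ that injectivity forces $\Tr(c)=0$, i.e.\ $c^{q-1}=-1$, whence $\phi(X)=X^3-c^2X$ is additive with trivial kernel. Both outcomes are exactly \eqref{11}.

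For $b\ne 0$ the first step is an explicit collision. Fix $x\ne 0$ and $\zeta\in\mu_{q+1}$; since $\zeta x$ has the same norm $s$ and angle $\zeta^{q-1}u$, comparing $f(\zeta x)$ with $f(x)$ and using $\zeta^q=\zeta^{-1}$ yields, for $\zeta\ne 1$, the single relation $\zeta=bu/(s+c)$. Thus whenever some $s=N(x)$ satisfies $\Delta(s)\colonequals s^2+\Tr(c)s+(N(c)-N(b))=0$ (which makes this $\zeta$ lie in $\mu_{q+1}$), one obtains a genuine collision $f(\zeta x)=f(x)$ with $\zeta x\ne x$, because on $N^{-1}(s)$ the quantity $x^{q-1}$ takes at least two values for every $q\ge 2$. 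Hence a necessary condition for $f$ to permute is that $\Delta$ has no root in $\F_q^*$; equivalently $\Delta$ is irreducible over $\F_q$, or $\Delta(X)=X^2$ (that is, $\Tr(c)=0$ and $N(c)=N(b)$). In the same coordinates the angle of $f(x)$ is the image of $u$ under the M\"obius map $m_s(u)=\bigl((s+c^q)u+b^q\bigr)/\bigl(bu+(s+c)\bigr)$, whose matrix is unitary with determinant $\Delta(s)$, so each $m_s$ permutes $\mu_{q+1}\cong\PG(1,q)$ precisely when $\Delta(s)\ne 0$.

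The main obstacle is the remaining cross-fibre injectivity: ruling out $f(x)=f(y)$ with $N(x)\ne N(y)$. Here $f$ no longer respects the norm fibration, so there is no clean quotient map, and I expect this to be the crux. For $q=2$ there is a single nonzero norm value, so the cross-fibre issue is vacuous and the necessary condition above already forces $c=1$: indeed $f$ reduces modulo $X^4-X$ to the additive polynomial $bX^2+(c+1)X$, which permutes $\F_4$ exactly when $c=1$, giving \eqref{12}. For $q\ge 3$ my plan is to exploit the decomposition of $f$ into the norm-scaling $s\mapsto sN(s+bu+c)$ together with the unitary M\"obius family $\{m_s\}$ on $\mu_{q+1}$: a permutation would force, for each target angle $\theta\in\mu_{q+1}$, the $q-1$ norms coming from the distinct fibres to be pairwise distinct, and I would show this fails either by a count using the $\PGL$-structure of the $m_s$, or, for generic coefficients, by applying Weil's bound to the auxiliary curve $\bigl(f(X)-f(Y)\bigr)/(X-Y)=0$ over $\F_{q^2}$ to produce off-diagonal points. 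The degree $q+2$ is of size comparable to $\sqrt{q^2}$, so Weil's bound is delicate and must be coupled with an irreducibility analysis of that curve, with the finitely many degenerate coefficient tuples and all small $q$ settled by direct computation; the only survivor is $q=2$, completing the classification.
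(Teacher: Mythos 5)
Your treatment of the case $b=0$ is correct and is a genuinely different route from the paper's: the paper simply quotes \cite[Cor.~2.3]{Z-subfields} there, whereas you reduce to the cubic $\phi(X)=X^3+\Tr(c)X^2+N(c)X$ on the norm line and settle it by hand; that part, and your $q=2$ computation, are fine. The necessary condition you extract for $b\ne 0$ (no root of $\Delta(X)=X^2+\Tr(c)X+N(c)-N(b)$ in $\F_q^*$, via same-fibre collisions and the unitary M\"obius family $m_s$) is also correct as far as it goes.

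However, there is a genuine gap exactly where you flag "the crux": you never prove that $f$ fails to permute $\F_{q^2}$ when $b\ne 0$ and $q>2$, which is the main content of the theorem. Neither of your two proposed routes can be made to work as described. The Weil-bound route is not merely "delicate" but structurally impossible: the standard non-exceptionality bound (the paper's Lemma~\ref{Weil}) requires the field size to exceed roughly $(\deg f)^4/4$, and here the field has size $q^2$ while $(\deg f)^4=(q+2)^4$, so the hypothesis is never satisfied for any $q$; consequently there is no "finitely many degenerate tuples plus small $q$" residue --- \emph{every} $q$ and every coefficient pair would remain unhandled. The alternative "count using the $\PGL$-structure of the $m_s$" is only a hope, not an argument, and it is further complicated by the fact that for odd $q$ the angle map $x\mapsto x^{q-1}$ is $2$-to-$1$ on each norm fibre, so the norm--angle coordinates do not separate points. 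The paper closes this case by an entirely different mechanism: Hermite's criterion applied to $f(X)^{q-1}$ for odd $q$ and to $f(X)^{2q-1}$ for even $q>2$, where Lucas/Dickson's theorem on multinomial coefficients mod $p$ shows that the coefficient of $X^{q^2-1}$ (resp.\ the unique surviving term of degree divisible by $q^2-1$) is a nonzero multiple of a power of $b$. Some input of this kind --- or another global obstruction --- is needed; without it your argument establishes only a necessary condition for $b\ne 0$, not the nonexistence claimed in the theorem.
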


\begin{thm}\label{second}
For any prime power $q$, and any $a,b,c,d\in\F_q$, the map $\varphi\colon (x,y)\mapsto (x^3-ax-by, y^3-cx-dy)$ permutes\/ $\F_q\times\F_q$ if and only if one of the following holds:
\renewcommand{\theenumi}{\thethm.\arabic{enumi}}
\renewcommand{\labelenumi}{(\thethm.\arabic{enumi})}
\begin{enumerate}
\item\label{21} $q\not\equiv 1\pmod 3$ and $a=d=bc=0$;
\item\label{22} $q\equiv 0\pmod 3$, $bc=0$, and $a$ and $d$ are nonsquares in\/ $\F_q$;
\item\label{23} $q\equiv 0\pmod 3$, $bc\ne 0$, and no nonzero square in\/ $\F_q$ is a root of the polynomial $X^4 - (a^3+b^2d)X + b^2(ad-bc)$;
\item\label{24} $q=2$, $b=c=1$, and $1\in\{a,d\}$.
\end{enumerate}
\end{thm}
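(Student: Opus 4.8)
The plan is to use that $\varphi$ permutes the finite set $\F_q\times\F_q$ if and only if it is injective, and to organize everything by the residue of $q$ modulo $3$, since $x\mapsto x^3$ behaves completely differently in the three cases: it is the Frobenius (hence additive and bijective) when $3\mid q$, a bijection when $q\equiv 2\pmod 3$, and three-to-one on $\F_q^*$ when $q\equiv 1\pmod 3$. Throughout I would study collisions $\varphi(x,y)=\varphi(x',y')$, i.e.\ pairs with $x^3-x'^3=a(x-x')+b(y-y')$ and $y^3-y'^3=c(x-x')+d(y-y')$, and ask when the only solution is $(x,y)=(x',y')$. Since the analysis below is reversible, it delivers both implications of the ``if and only if''; the converse (that the listed conditions force a permutation) can also be checked directly.

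When $3\mid q$ the identity $(x+x')^3=x^3+x'^3$ makes $\varphi$ additive, so it is bijective precisely when its kernel is trivial, i.e.\ when the system $x^3=ax+by$, $y^3=cx+dy$ has only the solution $(0,0)$. If $bc=0$, say $c=0$, the system triangulates and triviality of the kernel amounts to each of the additive maps $x\mapsto x^3-ax$ and $y\mapsto y^3-dy$ being injective; translating this into whether $a$ and $d$ are nonzero squares gives \eqref{21} and \eqref{22}. If $bc\neq 0$ I would solve $y=(x^3-ax)/b$ and substitute into the second equation; using that cubing is additive in characteristic $3$, so $(x^3-ax)^3=x^9-a^3x^3$, this collapses to
\[
x^9-(a^3+b^2d)x^3+b^2(ad-bc)x=0 .
\]
Factoring out $x$ and setting $z=x^2$, the remaining condition is exactly that no nonzero square $z$ is a root of $X^4-(a^3+b^2d)X+b^2(ad-bc)$, which is \eqref{23}.

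For $q\not\equiv 0\pmod 3$ in odd characteristic I would pass to the symmetric coordinates $s=x+x'$, $u=x-x'$, $t=y+y'$, $v=y-y'$, using $x^3-x'^3=u(u^2+3s^2)/4$, which turns a collision into
\[
u(u^2+3s^2-4a)=4bv,\qquad v(v^2+3t^2-4d)=4cu,\qquad (u,v)\neq(0,0).
\]
Several families of collisions then dispose of the degenerate coefficient patterns. Because $\varphi$ is odd, any nonzero root of $\varphi$ yields a collision; and when $q\equiv 1\pmod 3$ a primitive cube root of unity $\omega$ gives $\varphi(\omega x,\omega y)=\varphi(x,y)$ whenever $ad=bc$. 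When $bc=0$, say $c=0$, the second coordinate $y^3-dy$ depends only on $y$, so $\varphi$ can permute $\F_q\times\F_q$ only if $y\mapsto y^3-dy$ permutes $\F_q$; and the conic $v^2+3t^2=4d$ (which is nondegenerate with $q\pm1$ points) shows that $y^3-dy$ is a permutation only for $d=0$ when $q\equiv 2\pmod 3$, and never when $q\equiv 1\pmod 3$. Iterating this on the first coordinate forces $a=0$ as well, so for $q\equiv 2\pmod 3$ the case $bc=0$ collapses to \eqref{21}, while $q\equiv 1\pmod 3$ is eliminated entirely. The field $\F_2$ is handled by a direct finite check: there $x^3=x$ makes $\varphi$ linear, and bijectivity reduces to a $2\times 2$ determinant, producing \eqref{21} and \eqref{24}.

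The main obstacle is the case $bc\neq 0$ with $q\not\equiv 0\pmod 3$, where none of the elementary collisions above is available and one must prove that a collision nonetheless exists. For $u,v\in\F_q^*$ the two displayed equations are solvable for $s$ and $t$ exactly when $F\colonequals(4a+4bv/u-u^2)/3$ and $G\colonequals(4d+4cu/v-v^2)/3$ are both squares in $\F_q$, so the number of collisions is governed by
\[
N=\tfrac14\sum_{u,v\in\F_q^*}\bigl(1+\eta(F)\bigr)\bigl(1+\eta(G)\bigr),
\]
where $\eta$ is the quadratic character. The diagonal term contributes $(q-1)^2/4$, the two single character sums are $O(q)$, and after clearing denominators the cross term $\sum_{u,v}\eta(FG)$ reduces, for each fixed $u$, to a character sum of a quintic in $v$, hence is $O(q^{3/2})$ by Weil's bound; thus $N>0$ for all sufficiently large $q$, forcing a collision and completing the ``only if'' direction. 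I would finish by checking the finitely many small $q$ with $q\not\equiv 0\pmod 3$ and $bc\neq 0$ directly, and by running the parallel argument in characteristic $2$ with the substitution $u=x+x'$ and the trace condition for $T^2+uT+xx'$ to split over $\F_q$. The delicate points will be isolating the exceptional $(a,b,c,d)$ for which the quintic becomes a constant times a perfect square (so Weil's bound does not apply) and making the threshold on $q$ explicit, both of which demand care but introduce no genuinely new ideas.
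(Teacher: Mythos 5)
Your case decomposition is correct and, for the routine cases, matches the paper: when $3\mid q$ the map is additive and your substitution $y=(x^3-ax)/b$ producing $x^9-(a^3+b^2d)x^3+b^2(ad-bc)x$ is exactly how the paper derives \eqref{23}; the $bc=0$ reduction to $X^3-aX$ and $X^3-dX$, and the $q=2$ determinant computation, are likewise the paper's arguments. Where you genuinely diverge is the crux case $bc\ne 0$, $q\not\equiv 0\pmod 3$. The paper fixes a value $u$ of the first coordinate and studies the single-variable degree-$9$ polynomial $H_u(X)=((X^3-aX-u)/b)^3-cX-d(X^3-aX-u)/b$: a short coefficient comparison shows $H_u$ is indecomposable, so its geometric monodromy group is a primitive group containing a $9$-cycle, hence doubly transitive by Schur's theorem, hence $(H_u(X)-H_u(Y))/(X-Y)$ is irreducible and a packaged Weil-bound criterion (Lemma~\ref{Weil}) shows $H_u$ cannot permute $\F_q$ for $q>1793$; small $q$ are checked by computer. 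Your route instead counts collisions globally via quadratic-character sums in the symmetric coordinates $u=x-x'$, $v=y-y'$. Both approaches rest on Weil's bound, but the paper's monodromy detour is precisely what disposes of the ``exceptional'' configurations in one clean stroke, whereas your route must confront them by hand.

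That is where your proposal has a real gap. You defer exactly the two points that carry the difficulty: (i) showing that the quintic $v\mapsto \frac{v}{9}(4a+4bv/u-u^2)(4cu+4dv-v^3)$ is a nonzero constant times a square for only $O(1)$ values of $u$ (uniformly in $a,b,c,d$ with $bc\ne 0$), so that the cross term really is $O(q^{3/2})$ and the main term $(q-1)^2$ wins; and (ii) the entire characteristic-$2$ analogue, where the quadratic character is replaced by trace conditions and additive character sums and the degeneracy analysis must be redone from scratch. Neither step is likely to fail, but neither is routine bookkeeping either -- the exceptional-polynomial analysis is the mathematical content of this case, and until it is carried out (together with an explicit threshold on $q$ and the finite verification below it, which the paper handles by machine up to $q\le 1793$) the ``only if'' direction is not proved. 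If you want to avoid that analysis, the paper's device of reducing to a single nine-dimensional fiber polynomial $H_u$ and invoking indecomposability plus double transitivity is the cleaner path.
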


\begin{thm}\label{third}
Let $q$ be a power of $3$, and pick $a,b,c,d,e\in\F_q$ with $e\ne 0$. Then $\varphi\colon (x,y)\mapsto (x^3-exy^2-ax-by, y^3-cx-dy)$ permutes\/ $\F_q\times\F_q$ if and only if $c=0$, $d$ is either zero or a nonsquare, and one of the following holds:
\begin{itemize}
\item $a=0$ and $e$ is a nonsquare; or
\item $q=3$, $a=-1$, and $e=1$.
\end{itemize}
\end{thm}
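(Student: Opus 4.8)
The plan is to exploit the triangular structure of $\varphi$. Writing $f(x,y)\colonequals x^3-exy^2-ax-by$ and $g(x,y)\colonequals y^3-cx-dy$, note that in characteristic $3$ the map $g$ is additive, so for any two points the difference of their $g$-values is $v^3-cu-dv$, where $u\colonequals x_1-x_2$ and $v\colonequals y_1-y_2$. My first step is to prove that $c=0$ is forced. If $c\ne0$, I would manufacture a collision directly: the identity $x_1y_1^2-x_2y_2^2=x_2v(2y_2+v)+u(y_2+v)^2$ shows that the equations $f(x_1,y_1)=f(x_2,y_2)$ and $g(x_1,y_1)=g(x_2,y_2)$ become
\[
v^3-cu-dv=0,\qquad u^3-e\bigl(x_2v(2y_2+v)+u(y_2+v)^2\bigr)-au-bv=0.
\]
Given any $v\ne0$, the first equation determines $u=(v^3-dv)/c$; then for any $y_2$ with $2y_2+v\ne0$ the second equation is linear in $x_2$ with nonzero leading coefficient $-ev(2y_2+v)$, hence has a unique solution $x_2$. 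Setting $(x_1,y_1)=(x_2+u,y_2+v)$ produces two distinct points with the same image, so $\varphi$ is not injective. Hence $c=0$.

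With $c=0$ the second coordinate $y^3-dy$ depends only on $y$, so $\varphi$ permutes $\F_q\times\F_q$ if and only if $y\mapsto y^3-dy$ permutes $\F_q$ and, for each fixed $y$, the map $x\mapsto x^3-(ey^2+a)x-by$ permutes $\F_q$. Since the additive constant $-by$ is irrelevant to bijectivity, the latter is equivalent to $x\mapsto x^3-(ey^2+a)x$ being a bijection. Both maps are additive polynomials, and $x\mapsto x^3-\alpha x$ is a bijection precisely when its kernel $\{0\}\cup\{x:x^2=\alpha\}$ is trivial, i.e.\ when $\alpha$ is either $0$ or a nonsquare. Thus $y\mapsto y^3-dy$ is a bijection exactly when $d$ is zero or a nonsquare, giving one of the stated conditions, and $\varphi$ is a permutation if and only if additionally $ey^2+a$ is never a nonzero square as $y$ ranges over $\F_q$.

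It remains to determine when $ey^2+a$ avoids the nonzero squares. When $a=0$ this holds exactly when $e$ is a nonsquare, since for $y\ne0$ the value $ey^2$ is a nonzero square iff $\chi(e)=1$ (where $\chi$ denotes the quadratic character of $\F_q$, with $\chi(0)=0$); this yields the first bullet. The main work is the case $a\ne0$, which I would settle by a character-sum count. Using the standard evaluation $\sum_{y\in\F_q}\chi(ey^2+a)=-\chi(e)$ (valid since the discriminant $-ea\ne0$ in characteristic $3$), the number of $y$ for which $ey^2+a$ is a nonzero square equals
\[
\tfrac12\bigl(q-1-\chi(e)-\chi(-a/e)\bigr).
\]
Requiring this to vanish forces $q=1+\chi(e)+\chi(-a/e)\le 3$, so $q=3$ with $\chi(e)=\chi(-a/e)=1$; over $\F_3$ this means $e=1$ and $a=-1$, giving the second bullet.

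The sharpness of this last count---that beyond $a=0$ the only surviving solution is the single field $\F_3$---is the crux of the argument: pinning down that $q=3$ is genuinely forced (rather than merely bounded) is where the characteristic-$3$ arithmetic does the essential work. The reduction steps, by contrast, are elementary once one observes that the collision construction kills $c\ne0$ outright and that the $c=0$ map is triangular.
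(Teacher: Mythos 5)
Your proof is correct, and in its key step it takes a genuinely different --- and substantially lighter --- route than the paper. For $c\ne 0$ the paper solves the second coordinate for $x$, substitutes into the first to obtain a degree-$9$ polynomial $H_v(Y)=Y^9-ec^2Y^5+\cdots+ec^2vY^2+\cdots$, and then invokes Proposition~\ref{newlem} (that $X^9+aX^5+bX^3+cX^2+dX$ with $ac\ne0$ never permutes $\F_q$), whose proof occupies most of Section~4 and rests on Hermite's criterion with exponents $(q+3)/6$ and $(q+51)/6$ together with the delicate base-$3$ digit analysis of Lemmas~\ref{int} and \ref{int2}. Your direct collision construction bypasses all of this: since $x\mapsto x^3$ and the linear terms are additive in characteristic $3$, the only non-additive obstruction is $-exy^2$, and your identity $x_1y_1^2-x_2y_2^2=x_2v(2y_2+v)+u(y_2+v)^2$ lets you prescribe $v\ne 0$, read off $u=(v^3-dv)/c$, choose $y_2$ with $2y_2+v\ne 0$, and solve a linear equation in $x_2$ with leading coefficient $-ev(2y_2+v)\ne 0$; I checked the identity and the nonvanishing, and the resulting pair of distinct points with equal images is valid. (Of course Proposition~\ref{newlem} retains independent interest, but it is not needed for this theorem on your route.) For $c=0$ your reduction to ``$d$ zero or nonsquare and $ey^2+a$ never a nonzero square'' is identical to the paper's; for $a\ne0$ you then count the offending $y$ via the standard evaluation $\sum_y\chi(ey^2+a)=-\chi(e)$, obtaining $N_+=\tfrac12(q-1-\chi(e)-\chi(-a/e))$ and hence $q\le 3$, whereas the paper counts $\F_q$-points on the genus-$0$ conic $X^2=eY^2+a$; these are essentially two phrasings of the same count, with yours being the more self-contained. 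Altogether your argument is a correct and more elementary proof of the theorem.
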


Our final result relies on the following notion.

\begin{defn}
If $U$ and $V$ are $\F_q$-vector spaces, then a function $f\colon U\to U$ is \emph{$\F_q$-linearly equivalent} to a function $g\colon V\to V$ if $f=\rho\circ g\circ\eta^{-1}$ for some $\F_q$-vector space isomorphisms $\rho$ and $\eta$ from $V$ to $U$.
\end{defn}

\begin{rmk}
It is easy to see that $\F_q$-linear equivalence is an equivalence relation on the union of the sets of functions $\F_{q^2}\to\F_{q^2}$ and $\F_q\times\F_q\to\F_q\times\F_q$, and that $\F_q$-linear equivalence preserves the property of a function being bijective.
\end{rmk}

\begin{rmk}\label{iso}
It is well-known that the $\F_q$-vector space automorphisms of $\F_{q^2}$ are the functions induced by $aX^q+bX$ where $a,b\in\F_{q^2}$ satisfy $a^{q+1}\ne b^{q+1}$.
Likewise, the $\F_q$-vector space isomorphisms $\F_{q^2}\to\F_q\times\F_q$ are the functions $x\mapsto \bigl(ax+(ax)^q,bx+(bx)^q\bigr)$ where $a,b\in\F_{q^2}^*$ satisfy $a^{q-1}\ne b^{q-1}$, and the $\F_q$-vector space isomorphisms $\F_q\times\F_q\to\F_{q^2}$ are $(x,y)\mapsto ax+by$ where $a,b\in\F_{q^2}^*$ satisfy $a^{q-1}\ne b^{q-1}$.
\end{rmk}

\begin{thm}\label{bb-deg3}
Suppose $q$ is a prime power and $a,b,c,d\in\F_{q^2}$. Write $f(X)\colonequals aX^{3q} + bX^{2q+1} + cX^{q+2} + d X^3 \in \F_{q^2}[X]$. Then $f(X)$ permutes\/ $\F_{q^2}$ if and only if $f(X)$ is\/ $\F_q$-linearly equivalent to one of the following:
\renewcommand{\theenumi}{\thethm.\arabic{enumi}}
\renewcommand{\labelenumi}{(\thethm.\arabic{enumi})}
\begin{enumerate}
\item\label{541} $X^{q+2}$, where $q\not\equiv 1\pmod 3$;
\item\label{542} $(X^3,Y^3)$, where $q\not\equiv 1\pmod 3$;
\item\label{543} $(X^3-eXY^2,Y^3)$ for some nonsquare $e\in\F_q^*$, where $q\equiv 0\pmod 3$.
\end{enumerate}
\end{thm}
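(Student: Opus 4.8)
The plan is to exploit the homogeneity of $f$. Setting $u=X$ and $v=X^q$ we have $f(X)=F(X,X^q)$ for the binary cubic form $F(u,v)=av^3+bv^2u+cvu^2+du^3$; equivalently $f(X)=X^3g(X^{q-1})$ where $g(s)\colonequals as^3+bs^2+cs+d$. Because $x\mapsto x^{q-1}$ is a $(q-1)$-to-one homomorphism from $\F_{q^2}^*$ onto the group $\mu_{q+1}$ of $(q+1)$-st roots of unity, I would invoke the standard criterion (Park--Lee, Wan--Lidl, Zieve): $f$ permutes $\F_{q^2}$ if and only if $\gcd(3,q-1)=1$ — that is, $q\not\equiv1\pmod3$ — and the map $X\mapsto X^3g(X)^{q-1}$ permutes $\mu_{q+1}$. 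Using $X^q=X^{-1}$ on $\mu_{q+1}$, this map agrees with the rational function $R\colonequals\tilde g/g$, where $\tilde g(X)=d^qX^3+c^qX^2+b^qX+a^q$; one checks that $R$ has degree at most $3$, carries $\mu_{q+1}$ into itself, and is self-inversive in the sense that $R^\sigma(1/X)=1/R(X)$ for the $q$-power Frobenius $\sigma$. In particular $g$ must have no zero on $\mu_{q+1}$, which is exactly the requirement that $f$ be injective at $0$.

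Next I would transport the question to $\bP^1(\F_q)$. A Möbius transformation defined over $\F_{q^2}$ carries $\mu_{q+1}$ bijectively onto $\bP^1(\F_q)$ and conjugates the reflection $z\mapsto z^{-q}$ (whose fixed set is $\mu_{q+1}$) to the standard Frobenius; under it the self-inversive $R$ becomes a rational function $\bar R\in\F_q(T)$ of the same degree, and $f$ permutes $\F_{q^2}$ if and only if $q\not\equiv1\pmod3$ and $\bar R$ permutes $\bP^1(\F_q)$. Since the $\F_q$-linear automorphisms of $\F_{q^2}$ are exactly $X\mapsto\alpha X^q+\beta X$ with $\alpha^{q+1}\ne\beta^{q+1}$, and those preserving the subgroup $\mu_{q+1}$ induce on it the sharply $3$-transitive action of a copy of $\PGL_2(\F_q)$, the operation of $\F_q$-linear equivalence descends to composing $\bar R$ on both sides by $\PGL_2(\F_q)$ (together with Frobenius). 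Hence classifying the permutation polynomials $f$ up to $\F_q$-linear equivalence amounts to classifying the permutation rational functions of $\bP^1(\F_q)$ of degree at most $3$ up to this action.

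I would then feed in the known classification of low-degree permutation rational functions of $\bP^1(\F_q)$, supported where needed by facts about doubly transitive subgroups of $\PGL_2$. Every degree-$1$ map is a permutation; these correspond to the polynomials $\F_q$-linearly equivalent to $X^{q+2}$, giving case~\eqref{541}, and here $\deg R=1$ happens precisely when $g$ and $\tilde g$ share a quadratic factor, i.e.\ the roots of $g$ pair up under $z\mapsto z^{-q}$. In degree $3$ the classification leaves only $T^3$, which permutes iff $q\not\equiv1\pmod3$ and corresponds to $(X^3,Y^3)$ as in case~\eqref{542}, and, in characteristic $3$, the additional family $T^3/(1-eT^2)$, which permutes iff $e$ is a nonsquare and corresponds to $(X^3-eXY^2,Y^3)$ as in case~\eqref{543}. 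For each surviving $\bar R$ I would exhibit an explicit $\F_q$-linear equivalence realizing the stated normal form, noting that cases~\eqref{542} and~\eqref{543} are precisely the homogeneous ($a=b=c=d=0$) specializations of Theorems~\ref{second} and~\ref{third}, which may alternatively be quoted to confirm the permutation conditions.

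The main obstacle I expect is the interaction between the equivalence dictionary and the degree-by-degree case analysis: one must verify that $\F_q$-linear equivalence really induces the full $\PGL_2(\F_q)$-action on $\bar R$, and then isolate exactly the three surviving shapes while excluding every spurious candidate — in particular degree-$2$ permutations (which would force characteristic $2$ and $\bar R\sim T^2$, and must be shown incompatible with the self-inversive form $\tilde g/g$ having $g$ free of zeros on $\mu_{q+1}$) and the non-permuting degree-$3$ families, and pinning down the nonsquare criterion on $e$. A secondary but unavoidable technical point is the uniform treatment of the degenerate degrees of $g$ (when $a=0$, $d=0$, or $g$ and $\tilde g$ acquire common factors), so that the hypotheses of the $\mu_{q+1}$-reduction hold in every case and the no-zero condition on $g$ stays consistent with each normal form.
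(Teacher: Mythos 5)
Your proposal follows essentially the same route as the paper: reduce via $f(X)=X^3B(X^{q-1})$ to a permutation of $\mu_{q+1}$, pass to the rational function $\widehat B/B$ with $\widehat B(X)=X^3B^{(q)}(1/X)$, transport to $\bP^1(\F_q)$ by a M\"obius map carrying $\mu_{q+1}$ to $\bP^1(\F_q)$, invoke the classification of degree-at-most-three permutation rational functions (your $T^3/(1-eT^2)$ is the paper's $X^3-\alpha X$ conjugated by $1/X$), and lift the resulting degree-one compositions back to explicit $\F_q$-linear equivalences. The ``obstacles'' you flag --- ruling out degree two via the pairing of the roots of $\gcd(B,\widehat B)$ under $z\mapsto z^{-q}$, and exhibiting the explicit linear equivalences --- are exactly the verifications the paper carries out.
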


\begin{rmk}
It is not clear whether Theorem~\ref{bb-deg3} can be used to obtain explicit necessary and sufficient conditions on the coefficients of $f(X)$ which do not involve the coefficients of the additive polynomials involved in the $\F_q$-linear equivalence. However, we will determine such conditions via a different method in a forthcoming paper.
\end{rmk}

The interest of the above results about permutations is that there are only a handful of results determining all bijective functions having specified forms but with arbitrary coefficients and over an arbitrary (possibly square) finite field.
For instance, the most general such result is \cite[Thm.~1.1]{DZ-quad}, which determines
the members of a certain $4$-parameter family of polynomials which permute
$\F_{q^2}$.

This paper is organized as follows. Theorems~\ref{first}, \ref{second}, \ref{third} and \ref{bb-deg3} are proved in Sections 2, 3, 4 and 5, respectively. Then in Section~6 we prove Theorems~\ref{cm-bb-deg3} and \ref{cmthm}.


\section{Bijections induced by $X^{q+2}+bX^q+cX$}

In this section we prove Theorem~\ref{first}.
%
%
Our proof uses the following result of Dickson \cite[\S 14]{Dickson}, which is a generalization to multinomial coefficients of Lucas's theorem on mod $p$ reductions of binomial coefficients.

\begin{lemma}\label{lucas}
Let $p$ be prime, let $m_1,m_2,\dots,m_k$ be nonnegative integers, and write $m\colonequals\sum_{i=1}^k m_i$. Write $m=\sum_{j=0}^\ell b_jp^j$ for integers $b_j$ with $0\le b_j\le p-1$, and for each $i$ write $m_i=\sum_{j=0}^\ell a_{ij} p^j$ for integers $a_{ij}$ with $0\le a_{ij}\le p-1$.
Then the multinomial coefficient $\binom{m} {m_1,m_2,\dots,m_k}$ is coprime to $p$ if and only if for each $j$ with $0\le j\le\ell$ we have $b_j=\sum_{i=1}^k a_{ij}$.
\end{lemma}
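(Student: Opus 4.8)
The plan is to realize the multinomial coefficient as a coefficient in a polynomial expansion and then reduce that polynomial modulo $p$ via the Frobenius identity. Recall that $\binom{m}{m_1,\dots,m_k}$ is the coefficient of the monomial $x_1^{m_1}x_2^{m_2}\cdots x_k^{m_k}$ in $(x_1+x_2+\cdots+x_k)^m$, viewed in $\Z[x_1,\dots,x_k]$. Since coprimality to $p$ is detected by reduction modulo $p$, I would carry out the whole computation in $\F_p[x_1,\dots,x_k]$.

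First I would use the base-$p$ expansion $m=\sum_{j=0}^\ell b_j p^j$ together with the characteristic-$p$ identity $(x_1+\cdots+x_k)^{p^j}=x_1^{p^j}+\cdots+x_k^{p^j}$ (iterated from the case $j=1$) to obtain, over $\F_p$,
\[
(x_1+\cdots+x_k)^m=\prod_{j=0}^\ell\bigl(x_1^{p^j}+\cdots+x_k^{p^j}\bigr)^{b_j}.
\]
Next I would expand each factor by the ordinary multinomial theorem: the factor indexed by $j$ contributes
\[
\sum_{a_{1j}+\cdots+a_{kj}=b_j}\binom{b_j}{a_{1j},\dots,a_{kj}}\,x_1^{a_{1j}p^j}\cdots x_k^{a_{kj}p^j},
\]
the inner sum running over nonnegative integers $a_{ij}$. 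Because $0\le b_j\le p-1$, every coefficient $\binom{b_j}{a_{1j},\dots,a_{kj}}=b_j!/(a_{1j}!\cdots a_{kj}!)$ occurring here is a quotient of factorials of integers strictly below $p$, hence a unit in $\F_p$.

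The heart of the argument is then to extract the coefficient of $x_1^{m_1}\cdots x_k^{m_k}$ from the product of these expansions. In the product, the exponent of $x_i$ is $\sum_{j=0}^\ell a_{ij}p^j$, and matching this to $m_i$ forces $\sum_{j} a_{ij}p^j=m_i$. Since each chosen $a_{ij}$ satisfies $0\le a_{ij}\le b_j\le p-1$, uniqueness of the base-$p$ representation of $m_i$ shows that the only admissible choice is for $a_{ij}$ to be the base-$p$ digit of $m_i$ at position $j$, that is, precisely the $a_{ij}$ of the statement. Consequently the product contributes the monomial $x_1^{m_1}\cdots x_k^{m_k}$ if and only if these prescribed digits satisfy $a_{1j}+\cdots+a_{kj}=b_j$ for every $j$; and when this holds, the resulting coefficient is $\prod_{j=0}^\ell\binom{b_j}{a_{1j},\dots,a_{kj}}$, a product of units in $\F_p$ and hence itself a unit. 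This yields both directions of the claimed equivalence, and in particular recovers Lucas's theorem as the case $k=2$.

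The main obstacle I anticipate is the bookkeeping in the coefficient extraction: one must check that the factorization over $\F_p$ does not inadvertently merge distinct integer monomials, and that the digit-matching step genuinely invokes $a_{ij}\le b_j\le p-1$ to apply uniqueness of base-$p$ expansions (otherwise carries could produce spurious admissible choices). A secondary point worth isolating cleanly is the elementary fact that a multinomial coefficient whose top entry lies below $p$ is coprime to $p$, which is what guarantees the surviving coefficient is nonzero in $\F_p$.
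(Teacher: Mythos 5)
Your proof is correct and complete. Note, however, that the paper does not prove this lemma at all: it is quoted as a classical result of Dickson (cited as \cite{Dickson}, \S 14), a multinomial generalization of Lucas's theorem, so there is no internal proof to compare against. Your argument is the standard generating-function proof: realize $\binom{m}{m_1,\dots,m_k}$ as the coefficient of $x_1^{m_1}\cdots x_k^{m_k}$ in $(x_1+\cdots+x_k)^m$, reduce mod $p$, factor via the Frobenius identity $(x_1+\cdots+x_k)^{p^j}=x_1^{p^j}+\cdots+x_k^{p^j}$, and match digits. You correctly isolate the two points on which the argument hinges: first, that each inner multinomial coefficient $\binom{b_j}{a_{1j},\dots,a_{kj}}$ is a unit in $\F_p$ because its top entry satisfies $b_j\le p-1$; and second, that the constraint $0\le a_{ij}\le b_j\le p-1$ together with uniqueness of base-$p$ representations forces the digit-matching to be unique, so exactly one term of the expanded product can contribute to the target monomial and no cancellation mod $p$ can occur. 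These two observations give both directions of the equivalence, and the case $k=2$ recovers Lucas's theorem as you say. This is a clean, self-contained substitute for the external citation.
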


By a \emph{term} of the base-$p$ expansion of a nonnegative integer $m$, we mean some $b_j p^j$ as in the above result where $b_j>0$. When we speak of the union of the base-$p$ expansions of multiple integers, we mean the multiset of all terms of all the integers. In case $p=2$, the above result can be written in the following simpler way.


\begin{cor}\label{lucas2}
In the situation of Lemma~\emph{\ref{lucas}}, if $p=2$ then $\binom{m} {m_1,m_2,\dots,m_k}$ is odd if and only if the base-$2$ expansions of $m_i$ and $m_{i'}$ have no common terms whenever $i\ne i'$.
\end{cor}


We also use Hermite's classical criterion for permutation polynomials, as generalized to possibly non-prime finite fields by Dickson \cite[\S 11]{Dickson}:

\begin{lemma}\label{Hermite}
Let $q$ be a prime power, and pick $f(X)\in\F_q[X]$. Then $f(X)$ permutes\/ $\F_q$ if and only if both of the following hold:
\begin{itemize}
\item for each integer $m$ such that $0<m<q-1$ and $\gcd(m,q)=1$, the reduction of $f(X)^m$ mod $X^q-X$ has degree less than $q-1$; and
\item $f(X)$ has exactly one root in\/ $\F_q$.
\end{itemize}
\end{lemma}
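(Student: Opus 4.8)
The plan is to recast the permutation property via power sums and translate each power sum into a single coefficient of a reduced power of $f$. Write $p$ for the characteristic of $\F_q$, and for each integer $t\ge 0$ set $S_t\colonequals\sum_{x\in\F_q}f(x)^t$ (with $0^0\colonequals 1$). The basic input is the power-sum identity: $\sum_{x\in\F_q}x^k$ equals $-1$ if $k$ is a positive multiple of $q-1$ and equals $0$ otherwise, which is immediate from the cyclicity of $\F_q^*$. I would first deduce from it that, for any $g(X)\in\F_q[X]$ with reduction $\bar g(X)\colonequals g(X)\bmod(X^q-X)$, one has $\sum_{x\in\F_q}g(x)=\sum_{x}\bar g(x)=-c$, where $c$ is the coefficient of $X^{q-1}$ in $\bar g$ (the unique exponent in $[0,q-1]$ that is a positive multiple of $q-1$). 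Taking $g=f^t$ gives the dictionary $S_t=-\,[X^{q-1}]\bigl(f(X)^t\bmod(X^q-X)\bigr)$, so the displayed degree condition for a given $t$ is exactly $S_t=0$.

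Next I would characterize permutations by their power sums. For $y\in\F_q$ put $N(y)\colonequals\#\{x\in\F_q:f(x)=y\}$, so that $S_t=\sum_y N(y)\,y^t$ in $\F_q$ while $\sum_y N(y)=q$ as integers. If $f$ permutes $\F_q$ then every $N(y)=1$, so $S_t=\sum_y y^t$, which the power-sum identity evaluates to $0$ for $0\le t\le q-2$ and to $-1$ for $t=q-1$. Conversely, suppose $S_t=\sum_y y^t$ for all $0\le t\le q-1$. Then the $q$ equations $\sum_y(N(y)-1)y^t=0$ have as coefficient matrix the invertible Vandermonde matrix on the distinct nodes $y\in\F_q$, forcing $N(y)\equiv 1\pmod p$ for every $y$; since each $N(y)$ is a nonnegative integer congruent to $1$, we get $N(y)\ge 1$, and then $\sum_y N(y)=q$ over $q$ summands forces $N(y)=1$ for all $y$. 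Thus $f$ permutes $\F_q$ if and only if $S_t=0$ for $1\le t\le q-2$ and $S_{q-1}=-1$ (the case $t=0$ is automatic, as $S_0=q=0$).

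It remains to match these power-sum conditions with the two hypotheses. The hypothesis imposes $S_t=0$ only for $\gcd(t,q)=1$, i.e.\ $p\nmid t$, whereas I need $S_t=0$ for all $t\in[1,q-2]$; here the Frobenius identity $S_{pt}=\sum_x(f(x)^t)^p=\bigl(\sum_x f(x)^t\bigr)^p=S_t^p$ does the job, since writing $t=p^a t'$ with $p\nmid t'$ and $1\le t'\le t\le q-2$ gives $S_t=S_{t'}^{p^a}$, and $S_{t'}=0$ by hypothesis. For the last power sum, a direct count gives $S_{q-1}=\#\{x:f(x)\ne 0\}=q-Z=-Z$ in $\F_q$, where $Z$ is the number of roots of $f$ in $\F_q$; hence the ``exactly one root'' hypothesis ($Z=1$) yields $S_{q-1}=-1$, while conversely a permutation plainly has $Z=1$. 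Assembling these gives both directions of the criterion.

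The main obstacle is conceptual rather than computational: every identity above lives in $\F_q$ and so controls multiplicities only modulo $p$. The degree conditions force $N(y)\equiv 1\pmod p$ but cannot by themselves separate $N(y)=1$ from $N(y)=1+p$, and $S_{q-1}=-1$ records only $Z\equiv 1\pmod p$. The passage from these congruences to the exact equalities $N(y)=1$ is supplied entirely by the integer constraint $\sum_y N(y)=q$ with nonnegative summands, and this is precisely why the criterion must carry a separate ``exactly one root'' clause instead of being expressible through the degree conditions alone; a secondary care point is the Frobenius reduction that lets the stated range of $t$ omit multiples of $p$.
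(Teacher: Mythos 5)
Your proof is correct. The paper does not prove this lemma at all---it quotes it as Dickson's extension of Hermite's classical criterion, with a citation in place of an argument---so there is no internal proof to compare against; your argument is the standard classical one and is complete: the dictionary $S_m=-[X^{q-1}]\bigl(f(X)^m \bmod (X^q-X)\bigr)$, the Frobenius identity $S_{pt}=S_t^p$ to cover exponents divisible by $p$ (which is exactly why the hypothesis may restrict to $\gcd(m,q)=1$), the Vandermonde step forcing $N(y)\equiv 1\pmod p$, and the integer constraint $\sum_y N(y)=q$ upgrading those congruences to equalities, with the ``exactly one root'' clause supplying $S_{q-1}=-1$.
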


When applying Lemma~\ref{Hermite}, it is convenient to observe that if $g(X)$ is the reduction of $f(X)^m$ mod $X^q-X$ then the coefficient of $X^{q-1}$ in $g(X)$ equals the sum of the coefficients of $X^{i(q-1)}$ in $f(X)^m$ for all positive integers $i$.

We now prove Theorem~\ref{first} in case $q>2$ is even and $b\ne 0$. This was shown in \cite{CDLXXZ}, but the short proof below has some new features, so we include it for the reader's convenience.

\begin{lemma}\label{even}
If $q=2^k$ with $k>1$ then $f(X)\colonequals X^{q+2}+bX^q+cX$ does not permute\/ $\F_{q^2}$ for any $b,c\in\F_{q^2}$ with $b\ne 0$.
\end{lemma}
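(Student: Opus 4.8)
The plan is to apply Hermite's criterion (Lemma~\ref{Hermite}) over the field $\F_{q^2}$: to prove that $f$ is not a permutation it suffices to exhibit a single integer $m$ with $0<m<q^2-1$ and $\gcd(m,q^2)=1$ (equivalently, $m$ odd) for which the reduction of $f(X)^m$ modulo $X^{q^2}-X$ has degree exactly $q^2-1$. Expanding by the multinomial theorem gives $f(X)^m=\sum_{s+t+u=m}\binom{m}{s,t,u}b^t c^u X^{E}$, where $E=(q+2)s+qt+u=(q+1)s+(q-1)t+m$. By the observation following Lemma~\ref{Hermite} (applied with $q$ replaced by $q^2$), the coefficient of $X^{q^2-1}$ in the reduction equals the sum of $\binom{m}{s,t,u}b^t c^u$ over all triples $(s,t,u)$ of nonnegative integers with $s+t+u=m$ for which $E$ is a positive multiple of $q^2-1$. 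I would choose $m$ so that this sum is visibly nonzero whenever $b\neq 0$.

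For $q=2^k$ with $k\ge 3$ I would take $m=2q-1$. Since $\deg f(X)^m=m(q+2)=2q^2+3q-2$, the only positive multiples of $q^2-1$ that $E$ can attain are $q^2-1$ and $2(q^2-1)$, so the contributing triples are exactly the nonnegative solutions of $(q+1)s+(q-1)t=q(q-2)$ and of $(q+1)s+(q-1)t=2q^2-2q-1$ with $s+t\le m$. Solving these linear Diophantine equations (using $\gcd(q+1,q-1)=1$) leaves only finitely many candidates: the first equation forces $(s,t,u)=(q/2-1,\,q/2-1,\,q+1)$, while the second admits only $(q/2-1,\,3q/2,\,0)$ and $(3q/2-2,\,q/2-1,\,2)$. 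Corollary~\ref{lucas2} then decides which multinomial coefficients are odd: the first triple has two equal nonzero entries and so has even coefficient, and the third triple has its second and third entries both containing the bit in position $1$ once $k\ge 3$, so it too has even coefficient; meanwhile $(q/2-1,\,3q/2,\,0)=(2^{k-1}-1,\,2^{k-1}+2^{k},\,0)$ has pairwise disjoint binary supports and hence odd coefficient. Thus the coefficient of $X^{q^2-1}$ collapses to the single term $b^{3q/2}$, which is nonzero since $b\neq 0$, and $f$ fails Hermite's criterion.

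The case $k=2$, that is $q=4$, is the main obstacle and must be handled separately: there the degree bound admits the extra triple $(3q/2-2,\,q/2-1,\,2)$ with odd coefficient, and the two surviving terms $b^{3q/2}+bc^2=b(b^{5}+c^2)$ can cancel when $c^2=b^{5}$, so $m=2q-1$ no longer suffices. For $q=4$ I would instead use $m=q+1=5$, where the leading triple $(q+1,0,0)$ contributes the constant $1$ to the coefficient of $X^{q^2-1}$ while every other contributing triple, in particular $(q-2,0,3)$ with $\binom{q+1}{3}\equiv 0\pmod 2$, drops out; hence the coefficient equals $1\neq 0$ for all $b,c$. The real difficulty throughout is the combinatorial bookkeeping—enumerating the solutions of the two Diophantine equations and using Lucas's theorem to track exactly which multinomial coefficients are odd—together with the verification that the surviving terms cannot cancel, a danger that genuinely arises for small $q$ and is what forces the separate treatment of $q=4$.
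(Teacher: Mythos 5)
Your proposal is correct and follows essentially the same route as the paper: for $k\ge 3$ it applies Hermite's criterion with the exponent $m=2q-1$, isolates the same three candidate triples, and uses Corollary~\ref{lucas2} to show that only the term $b^{3q/2}X^{2q^2-2}$ survives, while the case $q=4$ is handled separately with the exponent $m=5$ exactly as in the paper. The only cosmetic difference is that for $q=4$ you kill the degree-$15$ term by computing that $\binom{5}{2,0,3}$ is even, whereas the paper observes directly that $(X^6+bX^4+cX)^4$ has no degree-$14$ term; these are equivalent.
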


\begin{proof}
We may assume $k>2$, since if $k=2$ then the only term of $f(X)^5$ having degree divisible by $q^2-1$ is $X^{30}$, so Lemma~\ref{Hermite} implies that $f(X)$ does not permute $\F_{q^2}$.
%
%
We will show that the only term of $f(X)^{2q-1}$ having degree divisible by $q^2-1$ is $b^{3q/2} X^{2q^2-2}$. Since $2q-1<q^2-1$, it follows via Lemma~\ref{Hermite} that $f(X)$ does not permute $\F_{q^2}$.

It remains to determine the terms of $f(X)^{2q-1}$ of degree divisible by $q^2-1$. By the multinomial theorem,
\[
f(X)^{2q-1} = \sum_{i=0}^{2q-1} \sum_{j=0}^{2q-1-i} \binom{2q-1}{i,j,2q-1-i-j} b^j c^{2q-1-i-j} X^{i(q+2)+jq+2q-1-i-j}.
\]
The term corresponding to some choice of $i$ and $j$ has degree $i(q+1)+j(q-1)+2q-1$. If this degree is divisible by $q^2-1$ then it is divisible by both $q-1$ and $q+1$, which says that
\[
2i+1\equiv 0\pmod{q-1} \quad\text{ and }\quad -2j-3\equiv 0\pmod{q+1}.
\]
These conditions may be rewritten as
\[
i\equiv \frac{q}2-1\pmod{q-1} \quad\text{ and }\quad j\equiv \frac{q}2-1\pmod{q+1}.
\]
If $i=j=q/2-1$ then the multinomial coefficient $\binom{2q-1}{i,j,2q-1-i-j}$ is divisible by $2$, by Corollary~\ref{lucas2}. Note that if $i=j=q/2-1$ then
\[
i(q+1)+j(q-1)+2q-1 = \Bigl(\frac{q}2-1\Bigr)\cdot 2q + 2q-1 = q^2-1.
\]
If we leave $i$ fixed, and add $q+1$ to $j$, then we increase $i(q+1)+j(q-1)+2q-1$ by $q^2-1$. Likewise, if we leave $j$ fixed, and add $q-1$ to $i$, then we increase $i(q+1)+j(q-1)+2q-1$ by $q^2-1$.
Since $f(X)^{2q-1}$ has degree $(q+2)(2q-1)=2q^2+3q-2<3q^2-3$,
any term of $f(X)^{2q-1}$ with degree divisible by $q^2-1$ must have degree $2q^2-2$, and the coefficient of $X^{2q^2-2}$ is the sum of the contributions from the pairs $(i,j)\in\{(q/2-1,3q/2),(3q/2-2,q/2-1)\}$.  Thus this coefficient is
\[
\binom{2q-1}{\frac{q}2-1,\frac{3q}2} b^{3q/2} + \binom{2q-1}{\frac{3q}2-2,\frac{q}2-1,2} b^{q/2-1} c^2.
\]
By Corollary~\ref{lucas2}, the second multinomial coefficient in the above expression is even since the base-$2$ expansion of $q/2-1$ includes the term $2$ (because $k>2$),
but the first multinomial coefficient in the above expression is odd since the base-$2$ expansions of $q/2-1$ and $3q/2=q+q/2$ have no common terms. 
Thus the only term of $f(X)^{2q-1}$ with degree divisible by $q^2-1$ is $b^{3q/2} X^{2q^2-2}$, so Lemma~\ref{Hermite} implies that $f(X)$ does not permute $\F_{q^2}$.
\end{proof}


Next we prove Theorem~\ref{first} in case $q$ is odd and $b\ne 0$.

\begin{lemma}\label{odd}
If $q$ is an odd prime power then $f(X)\colonequals X^{q+2}+bX^q+cX$ does not permute\/ $\F_{q^2}$ for any $b,c\in\F_{q^2}$ with $b\ne 0$.
\end{lemma}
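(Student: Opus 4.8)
The plan is to apply Hermite's criterion (Lemma~\ref{Hermite}, over $\F_{q^2}$) with the single exponent $m\colonequals q-1$, mirroring the strategy of Lemma~\ref{even} but exploiting a feature special to odd $q$. Note first that the choice $m=2q-1$ used in the even case cannot work here: a term of $f(X)^m$ arising from $\binom{m}{i,j,k}(X^{q+2})^i(bX^q)^j(cX)^k$ has degree $i(q+1)+j(q-1)+m$, and for this to be divisible by $q^2-1$ --- hence by $q-1$ --- we need $2i+m\equiv 0\pmod{q-1}$, which for odd $q$ (so that $q-1$ is even) forces $m$ to be even. Since $\gcd(q-1,p)=1$ and $0<q-1<q^2-1$, the exponent $m=q-1$ is admissible for Hermite's criterion, and I will show that the reduction of $f(X)^{q-1}$ modulo $X^{q^2}-X$ has a nonzero coefficient of $X^{q^2-1}$, which rules out $f(X)$ being a permutation.

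The heart of the argument is to identify every term of $f(X)^{q-1}$ whose degree is divisible by $q^2-1$. Since $f(X)^{q-1}$ has degree $(q+2)(q-1)=q^2+q-2<2(q^2-1)$, the only positive multiple of $q^2-1$ that can occur as such a degree is $q^2-1$ itself. Thus I must solve
\[
i(q+1)+j(q-1)+(q-1)=q^2-1, \qquad i,j\ge 0,\quad i+j\le q-1,
\]
equivalently $i(q+1)+j(q-1)=q(q-1)$. Solving for $j$ gives $j=q-i(q+1)/(q-1)$, so $(q-1)\mid i(q+1)$; since $\gcd(q-1,q+1)=2$ this is equivalent to $\tfrac{q-1}{2}\mid i$, leaving only $i\in\{0,\tfrac{q-1}{2},q-1\}$ in range. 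The values $i=0$ and $i=q-1$ force $j=q$ and $j<0$ respectively, both impossible, so the unique contributing triple is $(i,j,k)=(\tfrac{q-1}{2},\tfrac{q-1}{2},0)$. This term is a pure power of $b$, namely $\binom{q-1}{(q-1)/2}\,b^{(q-1)/2}X^{q^2-1}$, and crucially there is no cancellation because it is the only term of degree $q^2-1$.

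It then remains to check that the coefficient $\binom{q-1}{(q-1)/2}$ is nonzero in $\F_{q^2}$, i.e.\ coprime to $p$; together with $b\ne 0$ this yields a nonzero coefficient of $X^{q^2-1}$ and completes the proof via Hermite. For this I will invoke Lemma~\ref{lucas}: writing $q=p^k$, the base-$p$ expansion of $q-1$ has all $k$ digits equal to $p-1$, while $(q-1)/2=\tfrac{p-1}{2}(1+p+\cdots+p^{k-1})$ has all $k$ digits equal to $(p-1)/2$, so the digit-wise sums $\tfrac{p-1}{2}+\tfrac{p-1}{2}=p-1$ involve no carries and $\binom{q-1}{(q-1)/2}\equiv\binom{p-1}{(p-1)/2}^{\,k}\equiv(-1)^{k(p-1)/2}\not\equiv 0\pmod p$. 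The only genuine obstacle in this approach is the possibility of either several contributing terms (whose coefficients might cancel) or a surviving coefficient that vanishes modulo $p$; the choice $m=q-1$ is engineered precisely so that a single, pure-$b$ term survives, and the digit structure of $(q-1)/2$ makes its multinomial coefficient manifestly a unit, so both dangers are avoided.
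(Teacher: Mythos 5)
Your proposal is correct and follows essentially the same route as the paper's proof: apply Hermite's criterion with exponent $m=q-1$, show the unique multinomial term of degree divisible by $q^2-1$ is $\binom{q-1}{(q-1)/2,(q-1)/2}b^{(q-1)/2}X^{q^2-1}$, and use Lemma~\ref{lucas} on the base-$p$ digits of $q-1$ and $(q-1)/2$ to see the coefficient is a unit. The only cosmetic difference is that you solve the linear equation for $(i,j)$ directly, whereas the paper parametrizes via divisibility by $q-1$ and $q+1$ separately.
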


\begin{proof}
We will show that $f(X)^{q-1}$ is congruent mod $X^{q^2}-X$ to a polynomial of degree $q^2-1$. By Hermite's criterion (Lemma~\ref{Hermite}), it follows that $f(X)$ does not permute $\F_{q^2}$.

By the multinomial theorem,
\[
f(X)^{q-1} = \sum_{i=0}^{q-1} \sum_{j=0}^{q-1-i} \binom{q-1}{i,j,q-1-i-j} b^j c^{q-1-i-j} X^{i(q+2)+jq+q-1-i-j}.
\]
Since $f(X)^{q-1}$ has degree $(q+2)(q-1)=q^2-1+q-1<2q^2-2$, any term of $f(X)^{q-1}$ with degree divisible by $q^2-1$ must have degree equal to $q^2-1$. The summand corresponding to some choice of $i$ and $j$ has degree $i(q+1)+(j+1)(q-1)$.
If this degree equals $q^2-1$ then it is divisible by both $q-1$ and $q+1$, so that $i=r(q-1)/2$ and $j+1=s(q+1)/2$ for some integers $r\ge 0$ and $s>0$.
Conversely, for such $i$ and $j$ we have $i(q+1)+(j+1)(q-1)=(r+s)(q^2-1)/2$, which equals $q^2-1$ if and only if $r+s=2$. Moreover, for such $i$ and $j$ the hypothesis $q-1-i\ge j$ says that
\[
q-1\ge i+j=r(q-1)/2-1+s(q+1)/2=(r+s)(q-1)/2-1+s=q-2+s,
\]
so that $s\le 1$. Thus $i(q+1)+(j+1)(q-1)$ equals $q^2-1$ if and only if $r=s=1$, so that $i=(q-1)/2$ and $j+1=(q+1)/2$. Hence the coefficient of $X^{q^2-1}$ in $f(X)^{q-1}$ is
\[
\binom{q-1}{\frac{q-1}2,\frac{q-1}2} b^{(q-1)/2}.
\]
Writing $q=p^k$ where $p$ is prime, the base-$p$ expansions of $q-1$ and $(q-1)/2$ are $\sum_{i=0}^{k-1} (p-1)p^i$ and $\sum_{i=0}^{k-1}\frac{p-1}2 p^i$, respectively. 
By Lemma~\ref{lucas}, it follows that $\binom{q-1}{\frac{q-1}2,\frac{q-1}2}$ is coprime to $p$, so the coefficient of $X^{q^2-1}$ in $f(X)^{q-1}$ is the product of $b^{(q-1)/2}$ with an element of $\F_q^*$. 
Hence if $b\ne 0$ then the reduction of $f(X)^{q-1}$ mod $(X^{q^2}-X)$ has degree $q^2-1$, which by Lemma~\ref{Hermite} implies that $f(X)$ does not permute $\F_{q^2}$.
\end{proof}


We now prove Theorem~\ref{first}.

\begin{proof}[Proof of Theorem~\ref{first}]
If $b=c=0$ then $f(X)=X^{q+2}$ permutes $\F_{q^2}$ if and only if $\gcd(q+2,q^2-1)=1$; since plainly $\gcd(q+2,q+1)=1$, we have $\gcd(q+2,q^2-1)=\gcd(q+2,q-1)=\gcd(3,q-1)$, so that $f(X)$ permutes $\F_{q^2}$ if and only if $q\not\equiv 1\pmod 3$.
If $b=0\ne c$ then the result is \cite[Cor.~2.3]{Z-subfields}. Henceforth suppose $b\ne 0$. If $q$ is odd then the result is Lemma~\ref{odd}. If $q$ is even and $q>2$ then the result is Lemma~\ref{even}.
Finally, if $q=2$ and $b\ne 0$ then  $f(X)\equiv bX^2+(c+1)X\pmod{X^{q^2}-X}$, so that $f(X)$ induces a homomorphism $\varphi$ from the additive group of $\F_{q^2}$ to itself, and any such $\varphi$ is bijective if and only if its kernel is trivial, or equivalently $c=1$.
\end{proof}


\section{Bijections induced by $(X^3-aX-bY,Y^3-cX-dY)$}

In this section we prove Theorem~\ref{second}. We begin with some terminology and tools used in our proofs. Let $f(X)\in\F_q[X]$ have degree $n>0$, and assume that $\gcd(n,q)=1$. 
If $t$ is transcendental over $\mybar\F_q$ then $f(X)-t$ has no multiple roots in $\mybar{\mybar\F_q(t)}$, since its derivative is a nonzero polynomial in $\mybar\F_q[X]$ and hence has no common roots with $f(X)-t$. 
The \emph{geometric monodromy group} of $f(X)$ is the Galois group of $f(X)-t$ over $\mybar\F_q(t)$, viewed as a group of permutations of the roots of $f(X)-t$. 
Recall that a subgroup $G$ of $S_n$ is \emph{primitive} if the only partitions of $\{1,2,\dots,n\}$ which are preserved by $G$ are the trivial partitions consisting of either one $n$-element set or $n$ one-element sets. 
Also, a subgroup $G$ of $S_n$ is \emph{doubly transitive} if it is transitive on the set of ordered pairs of distinct elements of $\{1,2,\dots,n\}$. 
The following translations between properties of $f(X)$ and properties of $G$ are well known; see for instance \cite[Thms.~6.10--6.12]{LMT}.

\begin{lemma}\label{prim}
Let $f(X)\in\F_q[X]$ have degree $n>0$, where $\gcd(n,q)=1$, and let $G$ be the geometric monodromy group of $f(X)$. Then the following hold:
\begin{itemize}
\item $G$ contains an $n$-cycle;
\item $G$ is primitive if and only if $f(X)$ cannot be written as $g(h(X))$ for any $g,h\in\mybar\F_q[X]$ of degree less than $n$;
\item $G$ is doubly transitive if and only if $(f(X)-f(Y))/(X-Y)$ is irreducible in\/ $\mybar\F_q[X,Y]$.
\end{itemize}
\end{lemma}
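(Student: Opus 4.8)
The plan is to translate all three assertions into the language of the degree-$n$ extension $\mybar\F_q(x)/\mybar\F_q(t)$ with $t=f(x)$, and then read them off from Galois theory together with the ramification behaviour at infinity. Fix a splitting field $\Omega$ of $f(X)-t$ over $K\colonequals\mybar\F_q(t)$, label the roots $x_1,\dots,x_n$ (distinct, since $f(X)-t$ is separable as noted above), and let $G=\Gal(\Omega/K)$ act on them. Set $H\colonequals\Stab_G(x_1)$, so $H$ is the subgroup fixing the intermediate field $\mybar\F_q(x_1)$, which has degree $n$ over $K$. The three parts then correspond to three features of the $G$-set of roots: containing an $n$-cycle, primitivity, and double transitivity.

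For the first part I would analyze the place $t=\infty$ of $K$. Since $f$ is a polynomial of degree $n$, the pole of $x_1$ is the unique place of $\mybar\F_q(x_1)$ over $t=\infty$, and $t$ has a pole of order $n$ there, so $t=\infty$ is totally ramified in $\mybar\F_q(x_1)/K$ with ramification index $n$; the hypothesis $\gcd(n,q)=1$ makes this ramification tame. Passing to $\Omega$, and using that all residue fields equal the algebraically closed $\mybar\F_q$ so that decomposition and inertia coincide, the inertia group $I$ at a place over $t=\infty$ is cyclic, and a generator $\sigma$ of $I$ permutes the roots with cycle lengths equal to the ramification indices of the places of $\mybar\F_q(x_1)$ above $t=\infty$. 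As there is a single such place, of index $n$, the element $\sigma$ is an $n$-cycle.

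For the second part I would use the standard dictionary among blocks, subgroups, and fields: the blocks of imprimitivity of $G$ on $\{x_1,\dots,x_n\}$ containing $x_1$ correspond to subgroups between $H$ and $G$, which by Galois theory correspond to intermediate fields $K\subseteq M\subseteq\mybar\F_q(x_1)$. A decomposition $f=g\circ h$ with $\deg h<n$ yields the proper intermediate field $M=\mybar\F_q(h(x_1))$, so indecomposability is necessary for primitivity. For the converse I would invoke Lüroth's theorem to write any intermediate field as $M=\mybar\F_q(w)$, and then exploit that the pole of $x_1$ is totally ramified over $t=\infty$ to normalize $w$ to be a polynomial in $x_1$, producing a genuine decomposition $f=g\circ h$. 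I expect this last normalization — showing intermediate fields arise from polynomial rather than merely rational decompositions — to be the main obstacle, since it is exactly the point (going back to Ritt) where the polynomial structure must be fed in through the total ramification at infinity.

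For the third part, double transitivity of $G$ is equivalent to transitivity of the stabilizer $H$ on the remaining roots $\{x_2,\dots,x_n\}$. These are precisely the roots of $(f(X)-f(x_1))/(X-x_1)$ over $\mybar\F_q(x_1)$, and the $H$-orbits on them correspond to its irreducible factors over $\mybar\F_q(x_1)$; hence $H$ is transitive if and only if $(f(X)-f(x_1))/(X-x_1)$ is irreducible over $\mybar\F_q(x_1)$. Finally I would identify $x_1$ with an indeterminate $Y$, observe that $(f(X)-f(Y))/(X-Y)$ is, up to the nonzero leading coefficient of $f$, monic in $X$ and hence primitive in $\mybar\F_q[Y][X]$, and apply Gauss's lemma to conclude that its irreducibility over $\mybar\F_q(Y)$ coincides with its irreducibility in $\mybar\F_q[X,Y]$, which is the stated criterion.
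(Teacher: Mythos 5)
The paper does not actually prove this lemma; it cites \cite[Thms.~6.10--6.12]{LMT}, and your sketch is in substance the standard argument found there, so it is correct in outline and follows the ``same'' route. The one step worth fleshing out is the cyclicity of the inertia group $I$ at a place of $\Omega$ over $t=\infty$: tameness of $\mybar\F_q(x_1)/K$ at infinity does not by itself make $\Omega/K$ tame there, but $I$ acts transitively on the $n$ roots, its wild part is a normal $p$-subgroup whose orbits have $p$-power length, and $p\nmid n$ forces those orbits to be singletons, so the wild part is trivial by faithfulness of the action on the roots; then $I$ is a cyclic transitive group of degree $n$ and its generator is an $n$-cycle. The other two parts (L\"uroth plus the total-ramification-at-infinity normalization to get a polynomial decomposition, and Gauss's lemma to pass from irreducibility over $\mybar\F_q(Y)$ to irreducibility in $\mybar\F_q[X,Y]$) are exactly right.
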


The following result is a special case of \cite[Thm.~4]{Fan-Weil}, in light of the discussion about equivalent definitions in \cite[\S 2]{Fan-Weil}.  The main ingredient in its proof is Weil's bound.

\begin{lemma}\label{Weil}
Let $f(X)\in\F_q[X]$ have degree $n>0$. If $(f(X)-f(Y))/(X-Y)$ is irreducible in\/ $\mybar\F_q[X,Y]$ and
\[
q > \Bigl(\frac{(n-2)(n-3)+\sqrt{(n-2)^2(n-3)^2+8n-12}}{2}\Bigr)^2
\]
then $f(X)$ does not permute\/ $\F_q$.
\end{lemma}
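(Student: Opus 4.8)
The plan is to show that $f$ fails to be injective on $\F_q$ once $q$ is this large, by exhibiting a point of $\F_q\times\F_q$ off the diagonal at which $f$ takes equal values. Set
\[
h(X,Y)\colonequals \frac{f(X)-f(Y)}{X-Y}\in\F_q[X,Y],
\]
a polynomial of total degree $d\colonequals n-1$, and let $C$ be the affine plane curve $h=0$. Since $h(x,x)=f'(x)$ on the diagonal, the affine $\F_q$-points of $C$ split into the off-diagonal solutions of $f(x)=f(y)$ (which witness non-injectivity) together with the at most $n-1$ diagonal points $(x,x)$ with $f'(x)=0$. By hypothesis $h$ is absolutely irreducible, so $C$ is geometrically integral, and I may count its rational points via Weil's bound.

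Concretely, I would pass to the projective closure $\bar C\subset\bP^2$ and its normalization $\widetilde C$, a smooth projective geometrically irreducible curve over $\F_q$ of some genus $g$. Because $\bar C$ is a plane curve of degree $d=n-1$, its arithmetic genus is $(d-1)(d-2)/2$, and hence $g\le (n-2)(n-3)/2$ (the geometric genus never exceeds the arithmetic genus, so this holds regardless of the characteristic). Weil's bound then gives
\[
\#\widetilde C(\F_q)\ge q+1-2g\sqrt q\ge q+1-(n-2)(n-3)\sqrt q.
\]

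To convert this into a lower bound for the number $N$ of affine $\F_q$-points of $C$, I would discard the points of $\widetilde C$ lying over the points of $\bar C$ at infinity and over the singular points of $\bar C$. The top-degree form of $h$ has degree $d=n-1$, so there are at most $n-1$ points at infinity, and a careful accounting of these together with the normalization fibers over singularities yields a bound of the shape $N\ge q-(n-2)(n-3)\sqrt q-(n-2)$. Removing the at most $n-1$ diagonal points then shows that the number of off-diagonal $\F_q$-solutions of $f(x)=f(y)$ is at least
\[
q-(n-2)(n-3)\sqrt q-(2n-3).
\]
Writing $u\colonequals\sqrt q$, this quantity is positive exactly when $u^2-(n-2)(n-3)u-(2n-3)>0$, i.e.\ when $u$ exceeds the larger root of that quadratic, namely $\tfrac12\bigl((n-2)(n-3)+\sqrt{(n-2)^2(n-3)^2+8n-12}\bigr)$; squaring recovers precisely the displayed lower bound on $q$. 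For such $q$ a genuine collision $f(x)=f(y)$ with $x\ne y$ exists, so $f$ does not permute $\F_q$.

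The routine ingredients are Weil's bound and the plane-curve genus estimate. The main obstacle is the bookkeeping in the third step: matching the correction coming from the points at infinity and from the normalization over the singular locus so that the subtracted constant is \emph{exactly} $2n-3$, rather than a cruder $O(n)$ term that would worsen the threshold. This explicit point-counting is exactly what is packaged in \cite[Thm.~4]{Fan-Weil}, so in the write-up I would either reproduce that accounting in the present special case or, as the paper does, simply verify that the hypotheses here place us within the scope of that theorem.
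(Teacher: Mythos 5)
The paper offers no independent proof of this lemma: it is quoted as a special case of \cite[Thm.~4]{Fan-Weil}, whose contrapositive (via the equivalence, discussed in \cite[\S 2]{Fan-Weil}, between non-exceptionality and the existence of an absolutely irreducible factor of $(f(X)-f(Y))/(X-Y)$ over $\F_q$) is exactly this statement. Your sketch correctly reconstructs the argument behind that citation --- in particular you locate the one delicate point, namely that the error term must come out to exactly $2n-3$ (at most $n-1$ places of the normalization over the line at infinity and at most $n-1$ over the diagonal, both controlled by B\'ezout), and you sensibly defer that bookkeeping to the cited theorem, just as the paper does.
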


Finally, we use the following simple result \cite[\S 18, 22, and 57]{Dickson}.

\begin{lemma}\label{deg3}
Let $q$ be a prime power, $n$ a positive integer, and $a\in\F_q^*$.
Then $X^n$ permutes\/ $\F_q$ if and only if $\gcd(n,q-1)=1$, and $X^3-aX$ permutes\/ $\F_q$ if and only if $3\mid q$ and $a$ is a nonsquare in\/ $\F_q$.
\end{lemma}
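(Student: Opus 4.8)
The plan is to treat the two assertions of Lemma~\ref{deg3} separately, since they are essentially independent. For the monomial $X^n$, I would note that $x\mapsto x^n$ fixes $0$ and maps $\F_q^*$ into itself, so $X^n$ permutes $\F_q$ if and only if it permutes the cyclic group $\F_q^*$ of order $q-1$. On a cyclic group the power map $x\mapsto x^n$ is a group endomorphism whose kernel has order $\gcd(n,q-1)$, so it is bijective exactly when $\gcd(n,q-1)=1$. This settles the first claim at once.

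For $X^3-aX$ I would organize everything around the factorization
\[
f(X)-f(Y)=(X-Y)\bigl(X^2+XY+Y^2-a\bigr),
\]
so that $f$ fails to be injective precisely when the conic $x^2+xy+y^2=a$ has a solution off the diagonal $x=y$. First I would record two easy reductions. If $a$ is a nonzero square, say $a=b^2$ with $b\ne 0$, then $f(b)=0=f(0)$, so $f$ is not injective; hence $a$ must be a nonsquare, which in particular forces $q$ to be odd. Next, if $3\mid q$ then $f(X)$ is an additive polynomial (in characteristic $3$ both $X^3$ and $aX$ are additive), so it permutes $\F_q$ if and only if its kernel is trivial, and since $f(X)=X(X^2-a)$ this happens exactly when $a$ is a nonsquare. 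Together these establish the ``if'' direction and reduce the ``only if'' direction to excluding the case $3\nmid q$.

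The crux, and the step I expect to be the main obstacle, is to show that when $q$ is odd, $3\nmid q$ (so $q\ge 5$), and $a$ is a nonsquare, the polynomial $f$ still fails to permute. For this I would count the solutions of $x^2+xy+y^2=a$. Completing the square writes this form as $u^2+\tfrac34 v^2$, which is nondegenerate precisely because $3\ne 0$ in $\F_q$; the standard point count for a nondegenerate binary quadratic form taking a nonzero value then gives $q-\eta(-3)\in\{q-1,q+1\}$ solutions, where $\eta$ is the quadratic character. At most two of these lie on the diagonal (where $3x^2=a$), so at least $q-3>0$ of them have $x\ne y$, each producing a collision $f(x)=f(y)$ and contradicting injectivity. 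The delicate point is that the clean count depends on the nondegeneracy of the form, which holds exactly under the hypothesis $3\nmid q$; this is precisely what ties the characteristic condition to the conclusion, so care is needed to invoke it correctly rather than, say, leaning on a generic estimate such as Lemma~\ref{Weil}, which would not be sharp enough in this low-degree, small-$q$ regime.
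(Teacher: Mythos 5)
Your proof is correct. Note, however, that the paper does not actually prove Lemma~\ref{deg3}: it simply cites it as a classical fact from Dickson's 1897 paper (\S 18, 22, and 57 of \cite{Dickson}), so there is no internal argument to compare against. What you supply is a complete, self-contained elementary proof, and every step checks out: the kernel-of-the-power-map argument for $X^n$ is the standard one; the observation that $a$ must be a nonsquare (hence $q$ odd) follows from $f(b)=f(0)$ when $a=b^2$; the case $3\mid q$ is correctly handled via additivity of $X^3-aX$ in characteristic $3$; and the remaining case ($q$ odd, $3\nmid q$, $a$ a nonsquare, so $q\ge 5$) is correctly excluded by the exact count $q-\eta(-3)$ of points on the nondegenerate conic $x^2+xy+y^2=a$, of which at most two lie on the diagonal, leaving at least $q-3>0$ genuine collisions. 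You are also right that a Weil-type estimate such as Lemma~\ref{Weil} would be useless here and that the exact count is what ties the condition $3\nmid q$ to the failure of injectivity. The only thing your write-up buys beyond the paper is self-containedness; if this were to be inserted into the paper one would still likely prefer the one-line citation, but as a verification of the lemma your argument is sound.
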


With these ingredients in hand, we now prove Theorem~\ref{second}.

\begin{proof}[Proof of Theorem~\ref{second}]
First assume $bc=0$. Since the hypothesis and conclusion are unchanged by interchanging the pairs $(a,b)$ and $(d,c)$ when $bc=0$, we may assume that $b=0$.
Then $\varphi$ is bijective if and only if both $X^3-aX$ and $X^3-dX$ permute $\F_q$, which by Lemma~\ref{deg3} says that either \eqref{21} and \eqref{22} holds.

Henceforth assume $bc\ne 0$. Then the first coordinate of $\varphi(x,y)$ takes value $u$ if and only if $y=(x^3-ax-u)/b$, in which case the second coordinate of $\varphi(x,y)$ is $H_u(x)$ where
\[
H_u(X)\colonequals \Bigl(\frac{X^3-aX-u}b\Bigr)^3 - cX - d\cdot\frac{X^3-aX-u}b.
\]
Therefore $\varphi$ is bijective if and only if $H_u(X)$ permutes $\F_q$ for all $u\in\F_q$. If $q\equiv 0\pmod 3$ then
\[
H_u(X) = b^{-3}X^9 - (a^3b^{-3}+db^{-1})X^3 - (c-dab^{-1})X-u^3b^{-3}+dub^{-1},
\]
so that $H_u(X)-H_u(0)$ induces a homomorphism from the additive group of $\F_q$ to itself, and hence is bijective if and only if its kernel is trivial, which yields \eqref{23}. Henceforth assume $q\not\equiv 0\pmod 3$.

We now prove the result for $q\le 1793$. If $q=2$ then $\varphi(x,y)=\bigl((1+a)x+by,cx+(1+d)y\bigr)$ is a linear transformation of the $\F_2$-vector space $\F_2^2$, and hence is bijective if and only if its determinant is nonzero, which yields \eqref{24}.
If $2<q\le 1793$ (and $3\nmid q$) then we check via Magma that there are no $a,b,c,d\in\F_q$ for which $bc\ne 0$ and every $H_u(X)$ permutes $\F_q$. To speed up this program, we first compose on both sides with scalar multiples in order to reduce to the case that $b=1$ and $a$ is either $0$, $1$, or (for odd $q$) a prescribed nonsquare in $\F_q$.

Henceforth assume $q>1793$. We now show that $H_u(X)$ is not the composition of two degree-$3$ polynomials in $\mybar\F_q[X]$. Suppose to the contrary that $H_u(X)=g(h(X))$ for some $g,h\in\mybar\F_q[X]$ of degree $3$. 
By replacing $g(X)$ and $h(X)$ by $g(\rho(X))$ and $\rho^{-1}(h(X))$ for a suitably chosen degree-$1$ $\rho(X)\in\mybar\F_q[X]$, we may assume that $h(X)$ is monic and $h(0)=0$. 
Equating leading terms in $H_u(X)=g(h(X))$ shows that the leading coefficient of $g(X)$ is $b^{-3}$. Equating terms of degrees $8$ and $7$ shows that $h(X)=X^3-aX$. 
But this is impossible, since plainly $H_u(X)+cX$ is in $\mybar\F_q[X^3-aX]$, so that $H_u(X)$ cannot also be in $\mybar\F_q[X^3-aX]$.

We have shown that $H_u(X)$ is not the composition of two degree-$3$ polynomials in $\mybar\F_q[X]$. Since $H_u(X)$ has degree $9$, it follows that $H_u(X)$ is not the composition of two lower-degree polynomials in $\mybar\F_q[X]$. 
By Lemma~\ref{prim}, the geometric monodromy group of $H_u(X)$ is a primitive subgroup of $S_9$ which contains a $9$-cycle. But any such group is doubly transitive: this can be shown by directly checking all such groups, or alternately it is a special case of Schur's theorem (e.g., cf.\ \cite[Thm.~6.5]{LMT}). 
By Lemma~\ref{prim}, it follows that $(H_u(X)-H_u(Y))/(X-Y)$ is  irreducible in $\mybar\F_q[X,Y]$. Since $q>1793$, Lemma~\ref{Weil} implies that $H_u(X)$ does not permute $\F_q$, which concludes the proof.
%
%
\end{proof}


\section{Bijections induced by $(X^3-eXY^2-aX-bY,Y^3-cX-dY)$}

In this section we prove Theorem~\ref{third}. We begin with the following elementary lemmas.

\begin{lemma}\label{int}
Write $Q\colonequals 3^\ell$ for some $\ell\ge 3$. Let $n_1,n_2,n_3,n_5,n_9$ be nonnegative integers for which $n_1+2n_2+3n_3+5n_5+9n_9=Q-1$ and the union of the base-$3$ expansions of the $n_j$'s consists of one copy of each $3^i$ with $1\le i\le\ell-2$ along with some partition of $2$. Then the base-$3$ expansion of $n_5$ contains $Q/9$, and the base-$3$ expansion of $n_9$ contains $Q/27$.
\end{lemma}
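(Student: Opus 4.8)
The plan is to work entirely with base-$3$ digits and to track the carries generated when the weighted sum $n_1+2n_2+3n_3+5n_5+9n_9$ is compared against its prescribed value $Q-1=3^\ell-1$, whose base-$3$ expansion is $\sum_{i=0}^{\ell-1}2\cdot 3^i$ (all digits equal to $2$).

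First I would reformulate the hypothesis column-by-column. Writing $n_j=\sum_i d_{j,i}3^i$ with $d_{j,i}\in\{0,1,2\}$, the assumption on the union of expansions says: for each $i$ with $1\le i\le \ell-2$ exactly one $d_{j,i}$ equals $1$ and the rest vanish; for $i=0$ the digits sum to $2$; and for $i\ge \ell-1$ all digits vanish. I then introduce the weighted column sums $w_i\colonequals d_{1,i}+2d_{2,i}+3d_{3,i}+5d_{5,i}+9d_{9,i}$, so that $\sum_i w_i3^i=\sum_j c_j n_j=3^\ell-1$, where $(c_1,c_2,c_3,c_5,c_9)=(1,2,3,5,9)$. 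The digit conditions translate into: $w_i\in\{1,2,3,5,9\}$ for $1\le i\le\ell-2$ (namely $w_i=c_j$ for the unique $j$ with $d_{j,i}=1$); $w_0$ lies in the explicit finite set $\{2,3,4,5,6,7,8,10,11,12,14,18\}$ of admissible values $c_j+c_{j'}$ or $2c_j$; and $w_i=0$ for $i\ge\ell-1$.

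Next I would run the carry recursion. Let $r_i$ denote the carry into position $i$ when adding $\sum_i w_i3^i$, so that $w_i+r_i=2+3r_{i+1}$ for $0\le i\le\ell-1$ (with $r_0=0$), the digit $2$ coming from $3^\ell-1$. Working from the top, since $w_i=0$ for $i\ge\ell-1$ and there is no digit beyond position $\ell-1$, I obtain $r_\ell=0$ and hence $r_{\ell-1}=2$; propagating downward gives $w_{\ell-2}+r_{\ell-2}=8$ and then $w_{\ell-3}+r_{\ell-3}=11$. A short induction from the bottom bounds the carries uniformly in $\ell$: one checks $r_1=(w_0-2)/3\le 4$ because the largest admissible $w_0\equiv 2\pmod 3$ is $14$, and then $r_{i+1}=(w_i+r_i-2)/3\le(9+4-2)/3<4$ keeps $r_i\le 4$ for all $i$. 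Combining $w_{\ell-2}=8-r_{\ell-2}$ with $r_{\ell-2}\le 4$ and $w_{\ell-2}\in\{1,2,3,5,9\}$ forces $w_{\ell-2}=5$ (and $r_{\ell-2}=3$), which means $d_{5,\ell-2}=1$, i.e.\ the base-$3$ expansion of $n_5$ contains $Q/9=3^{\ell-2}$. Likewise $w_{\ell-3}=11-r_{\ell-3}$ with $r_{\ell-3}\le 4$ and $w_{\ell-3}\in\{1,2,3,5,9\}$ forces $w_{\ell-3}=9$ when $\ell\ge 4$, giving $d_{9,\ell-3}=1$, i.e.\ $n_9$ contains $Q/27=3^{\ell-3}$.

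The one genuinely separate point is the bottom edge case $\ell=3$, where position $\ell-3$ is $0$ and $w_0$ ranges over the larger admissible set rather than $\{1,2,3,5,9\}$; there I would instead read off $w_0=11$ from $r_1=3$ and note that $11$ is uniquely $9+2$ among the admissible decompositions, which again forces $d_{9,0}=1$. The main obstacle I anticipate is making the carry bound $r_i\le 4$ airtight and uniform in $\ell$ — in particular verifying that no admissible $w_0\equiv 2\pmod 3$ exceeds $14$ — since without it the relation $w_{\ell-2}=8-r_{\ell-2}$ could a priori also yield $w_{\ell-2}\in\{1,2,3\}$; once the carries are pinned to $\{0,\dots,4\}$ the arithmetic collapses to the unique solution $(w_{\ell-2},w_{\ell-3})=(5,9)$ and the conclusion follows.
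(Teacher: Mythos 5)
Your proof is correct, and it takes a genuinely different route from the paper's. The paper argues by global magnitude: it computes $\sum_j n_j=(Q+3)/6$, observes $n_9<Q/9$, and shows that if the single copy of $Q/9$ (resp.\ $Q/27$) landed in the wrong $n_j$, the weighted sum $n_1+2n_2+3n_3+5n_5+9n_9$ could not reach $Q-1$; the case $Q=27$ of the second assertion escapes this estimate and is settled by a separate ad hoc computation. Your argument is local instead: you encode the hypothesis as constraints on the weighted column sums $w_i$, run the base-$3$ carry recursion against the all-twos expansion of $Q-1$, and pin down the top columns exactly. I checked the details: the admissible set for $w_0$ is correct, its elements congruent to $2$ mod $3$ are $\{2,5,8,11,14\}$ so $r_1\le 4$, the induction $r_{i+1}\le\lfloor 11/3\rfloor=3$ closes (using $w_i\le 9$ for $i\ge 1$), and the intersections $\{1,2,3,5,9\}\cap[4,8]=\{5\}$ and $\{1,2,3,5,9\}\cap[7,11]=\{9\}$ yield the two conclusions, with the $\ell=3$ bottom case correctly resolved by the unique decomposition $11=2+9$. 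What your approach buys is uniformity: the edge case is absorbed into the same recursion rather than treated separately, and the method transfers with only cosmetic changes (a new admissible set for the low columns) to the companion Lemma~\ref{int2}, whose proof the paper omits and which under the counting approach again needs special attention at small $\ell$. The paper's estimate is quicker to state in the generic case but must be redone near the boundary. One presentational note: you assert $w_{\ell-3}+r_{\ell-3}=11$ before you have established $r_{\ell-2}=3$; in the final write-up, derive $w_{\ell-2}=5$, hence $r_{\ell-2}=3$, first.
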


\begin{proof}
The sum of the $n_j$'s is $1+(Q/3-1)/2=(Q+3)/6$. Since $Q-1\ge 9n_9$, we have $n_9<Q/9$. It follows that the base-$3$ expansion of $n_5$ includes $Q/9$, since otherwise we obtain the contradiction
\[
Q-1 \le 2\frac{Q}9+9\Bigl(\frac{Q+3}6-\frac{Q}9\Bigr) = \frac{13Q+81}{18}<Q-1.
\]
%
%
Next, if $Q>27$ then the base-$3$ expansion of $n_9$ includes $Q/27$, since otherwise we obtain the contradiction
\begin{align*}
Q-1 &\le 5\Bigl(\frac{Q}9+\frac{Q}{27}\Bigr)+9\Bigl(\frac{Q+3}6-\frac{Q}9-\frac{Q}{27}\Bigr) \\ 
&= \frac{49Q+243}{54} \\ 
&< Q-1.
\end{align*}
%
%
Finally, if $Q=27$ then $Q-1-5Q/9=11$ equals $n_1+2n_2+3n_3+5(n_5-3)+9n_9$ where $n_1,n_2,n_3,n_5-3,n_9$ are nonnegative integers whose sum is $2$. It follows that $n_2=n_9=1$, so that $n_9=Q/27$.
\end{proof}

\begin{lemma}\label{int2}
Write $Q\colonequals 3^\ell$ with $\ell\ge 5$. Let $n_1,n_2,n_5,n_9$ be nonnegative integers for which $n_1+2n_2+5n_5+9n_9=Q-1$ and the union of the base-$3$ expansions of the $n_j$'s consists of one copy of each $3^i$ with $0\le i\le \ell-2$ and $i\ne 2$, along with either one copy of $2\cdot 9$ or two copies of $9$. Then the base-$3$ expansion of $n_5$ contains $Q/9$, and if $\ell\ge 6$ then the base-$3$ expansion of $n_9$ contains $Q/27$.
\end{lemma}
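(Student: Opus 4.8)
The plan is to follow the proof of Lemma~\ref{int} closely. First I would record the total $\sum_j n_j$, which is pinned down by the prescribed multiset of base-$3$ terms: adding one copy of each $3^i$ with $0\le i\le\ell-2$ and $i\ne 2$, together with the contribution $2\cdot 3^2$ at the $3^2$ place, gives
\[
n_1+n_2+n_5+n_9=\Bigl(\sum_{i=0}^{\ell-2}3^i\Bigr)-3^2+2\cdot 3^2=\frac{Q+51}{6}.
\]
Because $\ell\ge 5$ we have $\ell-2\ge 3\ne 2$, so $3^{\ell-2}=Q/9$ really is the unique largest term of the multiset; similarly, when $\ell\ge 6$ the second-largest term $3^{\ell-3}=Q/27$ occurs, since then $\ell-3\ge 3\ne 2$. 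From $9n_9\le n_1+2n_2+5n_5+9n_9=Q-1$ I get $n_9\le(Q-1)/9<Q/9$, so $Q/9$ does not appear in the base-$3$ expansion of $n_9$.

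For the first assertion I would show that $Q/9$ must lie in $n_5$. Suppose not. Since it also avoids $n_9$, and the available coefficients are $1,2,5,9$, the term $Q/9$ is forced into $n_1$ or $n_2$ and so contributes to $n_1+2n_2+5n_5+9n_9=Q-1$ with coefficient at most $2$; bounding this top term's contribution by $2\cdot(Q/9)$ and the contribution of every other term by $9$ (the total of the other terms being $\sum_j n_j-Q/9$) gives
\[
Q-1\le 2\frac{Q}{9}+9\Bigl(\frac{Q+51}{6}-\frac{Q}{9}\Bigr)=\frac{13Q+1377}{18},
\]
which is strictly smaller than $Q-1$ once $Q>279$, i.e.\ for $\ell\ge 6$. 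This contradiction proves $Q/9\in n_5$ when $\ell\ge 6$.

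For the second assertion (with $\ell\ge 6$) I would use that $Q/9\in n_5$ and argue $Q/27\in n_9$. Assuming otherwise, $Q/27$ lies in $n_1$, $n_2$, or $n_5$, hence contributes with coefficient at most $5$; bounding the two largest terms (both now at coefficient $\le 5$) by $5$ and the rest by $9$ gives
\[
Q-1\le 5\Bigl(\frac{Q}{9}+\frac{Q}{27}\Bigr)+9\Bigl(\frac{Q+51}{6}-\frac{Q}{9}-\frac{Q}{27}\Bigr)=\frac{49Q+4131}{54},
\]
which is smaller than $Q-1$ exactly when $Q>837$, i.e.\ for $\ell\ge 7$. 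This settles the second assertion for $\ell\ge 7$.

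The remaining obstacle, which I expect to be the fiddliest part, is the boundary values where these crude bounds are just barely too weak: $\ell=5$ for the first assertion (where only the placement $Q/9\in n_2$ survives the bound) and $\ell=6$ for the second (where only $Q/27\in n_5$ survives). I would dispose of each exactly as the case $Q=27$ is handled in Lemma~\ref{int}: subtract off the contributions forced or hypothesized by the surviving placement (for instance $5\cdot(Q/9)$ when $Q/9\in n_5$, replacing $n_5$ by $n_5-Q/9$), thereby reducing to a weighted-sum equation of the same shape in nonnegative integers whose total $\sum_j n_j$ is now tiny. Enumerating the few ways to distribute the leftover terms $\{3^0,3^1\}$ and the $3^2$-place terms among the variables then shows that the required residual value cannot be realized, which rules out the offending placement and completes the proof.
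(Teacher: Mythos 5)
Your proposal is correct and follows essentially the same route as the paper's (deliberately omitted) proof: the same total $(Q+51)/6$, the same two crude bounds $\tfrac{13Q+1377}{18}$ and $\tfrac{49Q+4131}{54}$ with the same thresholds $\ell\ge 6$ and $\ell\ge 7$, and the same finite check at the boundary values $\ell=5$ and $\ell=6$. The deferred enumerations do work out exactly as you predict: in each boundary case the deficit from the maximal placement is $10$, and the available reductions from relocating the leftover terms $1$, $3$, and the $3^2$-place terms are each either $0$, at most $8$, or at least $12$, so no combination realizes $10$.
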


We omit the proof of Lemma~\ref{int2}, since it is nearly identical to that of Lemma~\ref{int}.

We now use the above lemmas to prove the following result, which is of some interest for its own sake.

\begin{prop}\label{newlem}
Let $q=3^k$ for some $k\ge 1$, and pick $a,b,c,d\in\F_q$ with $ac\ne 0$. Then $f(X)\colonequals X^9+aX^5+bX^3+cX^2+dX$ does not permute\/ $\F_q$.
\end{prop}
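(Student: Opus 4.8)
The plan is to apply Hermite's criterion (Lemma~\ref{Hermite}): to prove that $f(X)$ does not permute $\F_q$, it suffices to exhibit an integer $m$ with $0<m<q-1$ and $3\nmid m$ for which the reduction of $f(X)^m$ modulo $X^q-X$ has degree $q-1$. By the remark following Lemma~\ref{Hermite}, the coefficient of $X^{q-1}$ in this reduction equals the sum of the coefficients of $X^{i(q-1)}$ in $f(X)^m$ over all $i\ge 1$, so I want a choice of $m$ making this sum nonzero. Writing $q=3^\ell$, I would take $m=(q+3)/6$ as the primary choice; one checks that $3\nmid m$ and $0<m<q-1$, and that for $\ell\ge 3$ the degree $9m$ of $f(X)^m$ is smaller than $2(q-1)$, so that only terms of degree exactly $q-1$ can contribute to the coefficient of $X^{q-1}$. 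The finitely many small values of $q$ would be dispatched by direct computation.

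The core is a multinomial expansion. Writing a typical term of $f(X)^m$ via exponents $(n_1,n_2,n_3,n_5,n_9)$ summing to $m$, with degree $n_1+2n_2+3n_3+5n_5+9n_9$ and coefficient $\binom{m}{n_1,n_2,n_3,n_5,n_9}a^{n_5}b^{n_3}c^{n_2}d^{n_1}$, I note that in characteristic $3$ only the terms whose multinomial coefficient is coprime to $3$ survive; by Lemma~\ref{lucas} these are exactly the terms for which the base-$3$ expansions of the $n_j$ add without carries to that of $m$. For $m=(q+3)/6$ the base-$3$ digits of $m$ are precisely one copy of each $3^i$ with $1\le i\le\ell-2$ together with the digit $2$ in position $0$, which is exactly the hypothesis of Lemma~\ref{int}. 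That lemma forces every surviving degree-$(q-1)$ term to satisfy $n_5\ge q/9$ and $n_9\ge q/27$. A short additional degree count, subtracting off the forced contributions $5\cdot(q/9)$ and $9\cdot(q/27)$, then pins down a \emph{unique} surviving exponent vector, whose coefficient is a nonzero scalar times a monomial in $a$ and one of $b,c$.

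When that monomial is a nonzero multiple of $a^{N}c$, the hypothesis $ac\ne 0$ immediately yields a nonzero coefficient and we are done. The delicate case is when the surviving monomial is a multiple of $a^{N}b$, for then $m=(q+3)/6$ fails precisely when $b=0$. To handle $b=0$ I would instead take $m=(q+51)/6$: here $b=0$ forces $n_3=0$, the degree constraint becomes $n_1+2n_2+5n_5+9n_9=q-1$, and the base-$3$ digits of this $m$ match the hypothesis of Lemma~\ref{int2}, which again bounds $n_5$ and $n_9$ from below and lets me isolate a unique surviving term, now a nonzero multiple of a monomial in $a$ and $c$. Since $ac\ne 0$, the coefficient of $X^{q-1}$ is again nonzero, and Hermite's criterion gives the conclusion.

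The main obstacle is the combinatorial heart of the middle steps: converting the lower bounds $n_5\ge q/9$ and $n_9\ge q/27$ supplied by Lemmas~\ref{int} and~\ref{int2} into a proof that exactly one exponent vector survives, so that no cancellation can occur and the surviving coefficient is visibly nonzero. Managing this, together with the fact that the obvious choice $m=(q+3)/6$ degenerates exactly when $b=0$ — which is what necessitates the second, less transparent choice $m=(q+51)/6$ and its companion Lemma~\ref{int2} — is where essentially all the difficulty lies. The remaining small-$q$ cases and the elementary verifications that $m$ is admissible for Hermite's criterion are routine.
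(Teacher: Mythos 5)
Your proposal follows essentially the same route as the paper's proof: Hermite's criterion with exponent $(q+3)/6$, Lucas's theorem to isolate the carry-free multinomial terms, iterated use of Lemma~\ref{int} to pin down a unique surviving exponent vector (yielding a monomial in $a$ and $c$ when $k$ is odd, but in $a$ and $b$ when $k$ is even), and then the fallback exponent $(q+51)/6$ with Lemma~\ref{int2} to handle the case $b=0$, with small $q$ checked directly. The plan is sound and matches the paper's argument, including your correct identification that the real work is the inductive "subtract the forced top digits and reapply" step that isolates the unique tuple.
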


\begin{proof}
Suppose to the contrary that $f(X)$ permutes $\F_q$. We first apply Hermite's criterion (Lemma~\ref{Hermite}) with exponent $(q+3)/6$, noting that this exponent is a positive integer which is less than $q-1$.
If $q=3$ then we obtain the contradiction $c=0$.
If $q=9$ then we obtain $-ab=0$, so that $b=0$; 
then Lemma~\ref{Hermite} with exponents $4$ and $5$ yield $a^3(d+1)+a(d+1)^3+c^4=0$ and $c^3(a^2+(d+1)^2)=0$,
%
%
so that $a^2=-(d+1)^2$ and thus $c^4=0$, contradiction. Henceforth assume $q>9$. Then $g(X)\colonequals f(X)^{(q+3)/6}$ has degree $9(q+3)/6$, which is less than $2q-2$.
Thus Lemma~\ref{Hermite} implies that the coefficient of $X^{q-1}$ in $g(X)$ is zero. Letting $S$ be the set of all tuples of nonnegative integers $(n_1,n_2,n_3,n_5,n_9)$ such that $n_1+n_2+n_3+n_5+n_9=(q+3)/6$ and $n_1+2n_2+3n_3+5n_5+9n_9=q-1$, it follows that
\begin{equation}\label{hermnew}
\sum_{(n_1,n_2,n_3,n_5,n_9)\in S}\binom{\displaystyle{\frac{q+3}6}}{n_1,n_2,n_3,n_5,n_9} a^{n_5} b^{n_3} c^{n_2} d^{n_1} = 0.
\end{equation}
Let $T$ be the set of tuples $(n_1,n_2,n_3,n_5,n_9)$ in $S$ for which $\binom{(q+3)/6}{n_1,n_2,n_3,n_5,n_9}$ is coprime to $3$. Note that $(q+3)/6=1+\sum_{i=0}^{k-2} 3^i$.
By Lemma~\ref{lucas}, $T$ consists of the tuples $(n_1,n_2,n_3,n_5,n_9)$ in $S$ for which the union of the base-$3$ expansions of the $n_j$'s consists of a single copy of each $3^i$ with $1\le i\le k-2$ together with some partition of $2$. 
Pick any $(n_1,n_2,n_3,n_5,n_9)\in T$. We prove by induction on $i$ that if $0<2i\le k-1$ then $3^{k-2i}$ is a term in the base-$3$ expansion of $n_5$ and $3^{k-1-2i}$ is a term in the base-$3$ expansion of $n_9$. 
The base case $i=1$ follows from Lemma~\ref{int}. Now suppose $2<2i\le k-1$. By the inductive hypothesis, we know that if $1\le j\le i-1$ then the base-$3$ expansion of $n_5$ includes $3^{k-2j}$ and the base-$3$ expansion of $n_9$ includes $3^{k-1-2j}$. 
Let $n_z'$ be the least nonnegative residue of $n_z$ mod $3^{k-2i+1}$. Then $n_1'+2n_2'+3n_3'+5n_5'+9n_9'=q-1-5\sum_{j=1}^{i-1} 3^{k-2j}-9\sum_{j=1}^{i-1} 3^{k-1-2j} = q/3^{2i-2} - 1$,
%
%
and the union of the base-$3$ expansions of the various $n_z'$ consists of one copy of each $3^{\ell}$ with $1\le \ell\le k-2i$ along with some partition of $2$. Also the hypothesis $2i\le k-1$ says that $3\le k-2i+2$. 
Hence by Lemma~\ref{int} we conclude that the base-$3$ expansions of $n_5$ and $n_9$ include $3^{k-2i}$ and $3^{k-1-2i}$, respectively. This concludes the induction.

If $k$ is odd then, writing $n_5'$ and $n_9'$ for the least nonnegative residues of $n_5$ and $n_9$ mod $3$, we have $n_1+2n_2+3n_3+5n_5'+9n_9'=11$ and $n_1+n_2+n_3+n_5'+n_9'=2$. Thus $n_2=n_9'=1$ and $n_1=n_3=n_5'=0$. 
We have shown that $n_1=n_3=0$, $n_2=1$, $n_5=\sum_{i=1}^{(k-1)/2} 3^{2i-1}$, and $n_9=\sum_{i=0}^{(k-3)/2} 3^{2i}$. Conversely, it is easy to check that these $n_j$'s indeed yield a tuple in $T$.
Thus \eqref{hermnew} implies that $a^{n_5}c=0$, contradiction.

Henceforth suppose $k$ is even. Writing $n_5'$ and $n_9'$ for the least nonnegative residues of $n_5$ and $n_9$ mod $3$, we have $n_1+2n_2+3n_3+5n_5'+9n_9'=8$ and $n_1+n_2+n_3+n_5'+n_9'=2$. 
Thus $n_3=n_5'=1$ and $n_1=n_2=n_9'=0$, so $n_1=n_2=0$, $n_3=1$, $n_5=\sum_{i=0}^{k/2-1} 3^{2i}$, and $n_9=\sum_{i=1}^{k/2-1} 3^{2i-1}$.
Conversely, it is easy to check that this tuple of $n_j$'s yields an element of $T$.
Thus \eqref{hermnew} implies that $a^{n_5}b=0$, so that $b=0$.

Next, in case $k>2$ is even and $b=0$, we apply Lemma~\ref{Hermite} with exponent $(q+51)/6$, noting that this is a positive integer less than $q-1$.
In case $k=4$, this yields the contradiction $-a^{12}c^{10}=0$,
%
%
so we assume henceforth that $k\ge 6$. Then $g(X)\colonequals f(X)^{(q+51)/6}$ has degree less than $2q-2$,
so Lemma~\ref{Hermite} implies that the coefficient of $X^{q-1}$ in $g(X)$ is zero. 
Letting $S$ be the set of all tuples of nonnegative integers $(n_1,n_2,n_5,n_9)$ such that $n_1+n_2+n_5+n_9=(q+51)/6$ and $n_1+2n_2+5n_5+9n_9=q-1$, it follows that
\begin{equation}\label{hermnew2}
\sum_{(n_1,n_2,n_5,n_9)\in S} \binom{\displaystyle{\frac{q+51}6}}{n_1,n_2,n_5,n_9} a^{n_5} c^{n_2} d^{n_1} = 0.
\end{equation}
Let $T$ be the set of tuples $(n_1,n_2,n_5,n_9)$ in $S$ for which $\binom{(q+51)/6}{n_1,n_2,n_5,n_9}$ is coprime to $3$. Note that $(q+51)/6=9+\sum_{i=0}^{k-2}3^i$. 
By Lemma~\ref{lucas}, $T$ consists of the tuples $(n_1,n_2,n_5,n_9)$ in $S$ for which the union of the base-$3$ expansions of the $n_j$'s consists of a single $3^i$ for each $i$ with $0\le i\le k-2$ and $i\ne 2$, along with either one copy of $2\cdot 9$ or two copies of $9$. 
Now suppose that $(n_1,n_2,n_5,n_9)\in T$. Arguing as above, but now using Lemma~\ref{int2}, we conclude that the base-$3$ expansion of $n_5$ includes $3^{k-2},3^{k-4},\dots,3^4$, and the base-$3$ expansion of $n_9$ includes $3^{k-3},3^{k-5},\dots,3^5$.
%
%
Writing $n_j'$ for the least nonnegative residue of $n_j$ mod $81$, we have $n_1'+2n_2'+5n_5'+9n_9'=323$ 
%
%
and $n_1'+n_2'+n_5'+n_9'=49$,
where we know that for each $i\in\{0,1,3\}$ there is a unique $j$ for which the base-$3$ expansion of $n_j'$ includes $3^i$, and moreover there is no $j$ for which the base$-3$ expansion of $n_j'$ includes $2\cdot 3^i$. The unique solution is $(n_1',n_2',n_5',n_9')=(0,10,12,27)$.
%
%
Thus $n_1=0$, $n_2=10$, $n_5=3+\sum_{i=1}^{k/2-1} 3^{2i}$, and $n_9=\sum_{i=2}^{k/2-1} 3^{2i-1}$. 
Conversely, it is easy to check that these $n_j$'s yield an element of $T$,
so that \eqref{hermnew2} gives the contradiction $a^{10} c^{n_5}=0$.
\end{proof}

We now prove Theorem~\ref{third}.

\begin{proof}[Proof of Theorem~\ref{third}]
First suppose $c=0$. The projection of the image of $\varphi$ onto the second coordinate is surjective if and only if $Y^3-dY$ permutes $\F_q$, which holds if and only if either $d=0$ or $d$ is a nonsquare in $\F_q$. 
Suppose that $Y^3-dY$ permutes $\F_q$.
Then $\varphi$ is bijective if and only if, for each $y\in\F_q$, the polynomial $X^3-(ey^2+a)X$ permutes $\F_q$.
By Lemma~\ref{deg3}, this says that $ey^2+a$ is either zero or a nonsquare in $\F_q$ for each $y\in\F_q$. 
If $a=0$ then this condition holds if and only if $e$ is a nonsquare.
Now assume $a\ne 0$.
If $q=3$ then $\{ey^2+a:y\in\F_q\}=\{a,e+a\}$, so that $\varphi$ is bijective if and only if both $a$ and $e+a$ are in $\{0,-1\}$, which holds if and only if $a=-1$ and $e=1$. 
We show now that $\varphi$ is not bijective when $a\ne 0=c$ and $q>3$.
Suppose otherwise.
Since $a\ne 0$, the curve $X^2=eY^2+a$ is irreducible over $\mybar\F_q$. 
The closure $C$ of this curve in $\bP^2$ is nonsingular, and has at most two $\F_q$-rational points with $Y=\infty$.
For any $x,y\in\F_q$ with $x^2=ey^2+a$, the hypothesis that $ey^2+a$ is either zero or a nonsquare implies that $x=0$.
Since at most two elements $y\in\F_q$ satisfy $ey^2+a=0$, we conclude that $C$ has at most four $\F_q$-rational points.
But $C$ has genus $0$, so it has $q+1$ $\F_q$-rational points, which is impossible since $q>3$.

Henceforth suppose $c\ne 0$.
For any $v\in\F_q$, the second coordinate of $\varphi(x,y)$ equals $v$ if and only if $y^3-cx-dy=v$, or equivalently $x=(y^3-dy-v)/c$. 
Thus if the second coordinate of $\varphi(x,y)$ equals $v$ then the first coordinate of $\varphi(x,y)$ is
\[
c^{-3}(y^3-dy-v)^3 - ec^{-1}(y^3-dy-v)y^2-ac^{-1}(y^3-dy-v)-by.
\]
Therefore $\varphi$ is bijective if and only if, for each $v\in\F_q$, the polynomial
\[
H_v(Y)\colonequals (Y^3-dY-v)^3 - ec^2(Y^3-dY-v)Y^2 - ac^2(Y^3-dY-v)-bc^3Y
\]
permutes $\F_q$.
We compute
\[
H_v(Y)-H_v(0) = Y^9 - ec^2Y^5 + (edc^2-ac^2-d^3)Y^3 + ec^2vY^2 + (adc^2-bc^3)Y.
\]
Since $c$ and $e$ are nonzero, Proposition~\ref{newlem} implies that $H_v(Y)$ does not permute $\F_q$ for any $v\in\F_q^*$.
Therefore $\varphi$ is not bijective.
\end{proof}


\section{Bijections induced by $aX^{3q}+bX^{2q+1}+cX^{q+2}+dX^3$}

In this section we prove Theorem~\ref{bb-deg3}. We first present the notation, terminology, and previous results we will use.
\begin{itemize}
\item $q$ is a fixed prime power;
\item $\mu_{q+1}$ denotes the set of $(q+1)$-th roots of unity in $\F_{q^2}$;
\item $\bP^1(\F_q) \colonequals \F_q\cup\{\infty\}$ is the set of $\F_q$-rational points on $\bP^1$;
\item for any $g(X)\in\mybar\F_q(X)$, we write $g^{(q)}(X)$ for the rational function obtained from $g(X)$ by replacing each coefficient by its $q$-th power;
\item the \emph{degree} of a nonzero rational function $g(X)$ is the maximum of the degrees of $N(X)$ and $D(X)$, for any prescribed choice of coprime polynomials $N(X)$ and $D(X)$ such that $g(X)=N(X)/D(X)$.
\end{itemize}

The following result is a special case of \cite[Lemma~2.1]{Z-lem}.

\begin{lemma}\label{old}
Write $f(X)=X^r B(X^{q-1})$ where $r$ is a positive integer, $q$ is a prime power, and $B(X)\in\F_{q^2}[X]$. Then $f(X)$ permutes\/ $\F_{q^2}$ if and only if $\gcd(r,q-1)=1$ and $g_0(X)\colonequals X^r B(X)^{q-1}$ permutes $\mu_{q+1}$.
\end{lemma}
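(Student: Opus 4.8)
The plan is to exploit the multiplicative structure of $\F_{q^2}^*$ together with the fact that the $(q-1)$-st power map collapses each coset of $\F_q^*$ to a single point of the norm-one subgroup $\mu_{q+1}$; this is the standard reduction underlying the cited result, and I would reproduce it in self-contained form. Since $r\ge 1$ we have $f(0)=0$, so $f$ permutes $\F_{q^2}$ if and only if it restricts to a bijection of $\F_{q^2}^*$; in particular, if $f$ permutes then it must send $\F_{q^2}^*$ into itself, which forces $B(\zeta)\ne 0$ for every $\zeta\in\mu_{q+1}$. I would dispose of the degenerate case first: if $B(\zeta)=0$ for some $\zeta\in\mu_{q+1}$, then choosing $x$ with $x^{q-1}=\zeta$ gives $f(x)=0=f(0)$, so $f$ is not injective, and simultaneously $g_0(\zeta)=\zeta^r\cdot 0=0\notin\mu_{q+1}$, so $g_0$ does not even map $\mu_{q+1}$ into itself, let alone permute it. Thus both sides of the claimed equivalence are false, and there is nothing to prove. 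Henceforth I assume $B$ is nonvanishing on $\mu_{q+1}$.

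The key observation is that $f$ is compatible with the fibration $\phi\colon\F_{q^2}^*\to\mu_{q+1}$, $x\mapsto x^{q-1}$, whose image is all of $\mu_{q+1}$ and whose fibers are exactly the cosets of $\ker\phi=\F_q^*$. Raising $f(x)=x^r B(x^{q-1})$ to the $(q-1)$-st power gives
\[
f(x)^{q-1}=(x^{q-1})^r\,B(x^{q-1})^{q-1}=g_0(x^{q-1}),
\]
that is, $\phi\circ f=g_0\circ\phi$, where under the nonvanishing assumption $g_0$ genuinely maps $\mu_{q+1}$ into itself. I would then invoke the elementary principle that a self-map $f$ of a finite set which satisfies $\phi\circ f=\bar f\circ\phi$ for a surjection $\phi$ is a bijection if and only if $\bar f$ is a bijection and $f$ is injective on each fiber of $\phi$. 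The forward direction is immediate, and for the converse one notes that $f(x)=f(x')$ forces $\bar f(\phi(x))=\bar f(\phi(x'))$, hence $\phi(x)=\phi(x')$, and then $x=x'$ by fiberwise injectivity. Applied with $\bar f=g_0$, this shows that $f$ permutes $\F_{q^2}^*$ exactly when $g_0$ permutes $\mu_{q+1}$ and $f$ is injective on each coset $C_\zeta\colonequals\phi^{-1}(\zeta)$.

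It remains to identify fiberwise injectivity with the condition $\gcd(r,q-1)=1$. On a fixed coset $C_\zeta$ the factor $B(x^{q-1})=B(\zeta)$ is a nonzero constant, so $f$ agrees there with a nonzero scalar multiple of $x\mapsto x^r$. Writing $C_\zeta=x_0\F_q^*$ and $x=x_0 u$ with $u\in\F_q^*$, injectivity of $f$ on $C_\zeta$ is equivalent to injectivity of $u\mapsto u^r$ on the cyclic group $\F_q^*$ of order $q-1$, which holds precisely when $\gcd(r,q-1)=1$. Since this criterion does not depend on $\zeta$, combining the two conditions yields the stated equivalence.

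I expect the only real subtlety to be the bookkeeping around the point $0$ and the possibility that $B$ vanishes somewhere on $\mu_{q+1}$: one must verify that $g_0$ defines a self-map of $\mu_{q+1}$ before speaking of it permuting that set, and check that the degenerate case is consistent with both sides of the equivalence failing. Everything else is the routine passage from permutations of $\F_{q^2}$ to permutations of $\mu_{q+1}$, so once the compatibility identity $\phi\circ f=g_0\circ\phi$ is established the argument proceeds mechanically.
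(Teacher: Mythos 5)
Your argument is correct and complete: the reduction via the surjection $x\mapsto x^{q-1}$ onto $\mu_{q+1}$, the separate treatment of the case where $B$ vanishes on $\mu_{q+1}$, and the identification of fiberwise injectivity with $\gcd(r,q-1)=1$ together give exactly the stated equivalence. The paper itself offers no proof here, merely citing \cite[Lemma~2.1]{Z-lem}, and your write-up is precisely the standard argument behind that cited result, so there is nothing to add.
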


The following result encodes a procedure introduced in \cite{Z-Redei}, which is spelled out in \cite[Lemma~2.2]{Z-x}.

\begin{lemma}\label{lemx}
Let $q$ be a prime power, and write $g_0(X)=X^r B(X)^{q-1}$ where $r\in\Z$ and $B(X)\in\F_{q^2}[X]$. Then $g_0(X)$ permutes $\mu_{q+1}$ if and only if $B(X)$ has no roots in $\mu_{q+1}$ and $g(X)\colonequals X^r B^{(q)}(1/X)/B(X)$ permutes $\mu_{q+1}$.
\end{lemma}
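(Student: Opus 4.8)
The plan is to show that, once $B(X)$ has no roots in $\mu_{q+1}$, the functions $g_0$ and $g$ are literally \emph{equal} as maps $\mu_{q+1}\to\mu_{q+1}$, so that the permutation property transfers between them immediately; the remaining case is disposed of by checking that both sides of the equivalence fail.

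First I would record the two elementary facts that drive everything. For $x\in\mu_{q+1}$ we have $x^{q+1}=1$, hence $x^q=1/x$. And since the Frobenius map $\alpha\mapsto\alpha^q$ is a field automorphism of $\F_{q^2}$, for any $B(X)=\sum_i b_i X^i\in\F_{q^2}[X]$ and any $x\in\F_{q^2}$ we have $B(x)^q=\sum_i b_i^q(x^q)^i=B^{(q)}(x^q)$. Combining these, for $x\in\mu_{q+1}$ we obtain $B(x)^q=B^{(q)}(1/x)$. Now fix $x\in\mu_{q+1}$ with $B(x)\ne 0$; then $B(x)\in\F_{q^2}^*$, so $B(x)^{q-1}=B(x)^q/B(x)=B^{(q)}(1/x)/B(x)$, whence
\[
g_0(x)=x^r B(x)^{q-1}=x^r\,\frac{B^{(q)}(1/x)}{B(x)}=g(x).
\]
I would also note that $g_0$ sends such $x$ back into $\mu_{q+1}$, since $g_0(x)^{q+1}=(x^{q+1})^r\,B(x)^{(q-1)(q+1)}=1$ using $x^{q+1}=1$ and $B(x)^{q^2-1}=1$.

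It then remains to split into two cases according to whether $B$ has a root in $\mu_{q+1}$. If $B$ has such a root $\zeta$, then (using $q\ge 2$, so $q-1\ge 1$, while $\zeta^r\ne 0$) we get $g_0(\zeta)=\zeta^r\cdot 0=0\notin\mu_{q+1}$; thus $g_0$ fails even to map $\mu_{q+1}$ into itself and so does not permute it, while the right-hand condition, which demands that $B$ have no roots in $\mu_{q+1}$, also fails, so both sides of the asserted equivalence are false. If instead $B$ has no roots in $\mu_{q+1}$, then $g$ is defined at every point of $\mu_{q+1}$ (no $x\in\mu_{q+1}$ vanishes, so $B^{(q)}(1/x)$ is finite, and $B(x)\ne 0$), and the pointwise identity above shows $g_0$ and $g$ coincide as functions $\mu_{q+1}\to\mu_{q+1}$. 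Consequently $g_0$ permutes $\mu_{q+1}$ if and only if $g$ does, which is exactly the claim.

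I do not expect a genuine obstacle here: the entire content is the single computation $B(x)^q=B^{(q)}(1/x)$ valid on $\mu_{q+1}$. The only points requiring care are bookkeeping ones—confirming that the exponent $q-1$ is positive (so that a root of $B$ forces $g_0$ to output $0$, killing the permutation property), and checking under the no-root hypothesis that the rational function $g$ is genuinely pole-free on $\mu_{q+1}$, so that the phrase ``$g$ permutes $\mu_{q+1}$'' is meaningful and the transfer of bijectivity is legitimate.
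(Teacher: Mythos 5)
Your proof is correct, and it is exactly the standard argument: the paper itself does not prove this lemma but cites it from \cite[Lemma~2.2]{Z-x}, where the proof is precisely the observation that $B(x)^{q-1}=B(x)^q/B(x)=B^{(q)}(x^q)/B(x)=B^{(q)}(1/x)/B(x)$ for $x\in\mu_{q+1}$ with $B(x)\ne 0$, so that $g_0$ and $g$ coincide on $\mu_{q+1}$ once $B$ has no roots there, while a root of $B$ in $\mu_{q+1}$ forces $g_0$ to take the value $0\notin\mu_{q+1}$. Your handling of the degenerate case and of the well-definedness of $g$ on $\mu_{q+1}$ is also correct.
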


The next two results are immediate consequences of \cite[Lemmas~2.1 and 3.1]{Z-Redei}.

\begin{lemma}\label{deg1mu}
The degree-one rational functions in\/ $\mybar\F_q(X)$ which permute $\mu_{q+1}$ are precisely the functions $\rho(X)=(\beta^qX+\alpha^q)/(\alpha X+\beta)$ where $\alpha,\beta\in\F_{q^2}$ satisfy $\alpha^{q+1}\ne \beta^{q+1}$.
\end{lemma}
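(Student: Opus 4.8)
The plan is to prove both inclusions via a counting argument: first I show directly that every function of the stated form permutes $\mu_{q+1}$, and then I show that the number of such functions equals the total number of degree-one rational functions permuting $\mu_{q+1}$, which forces the two sets to coincide.

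For the forward direction, fix $\alpha,\beta\in\F_{q^2}$ with $\alpha^{q+1}\ne\beta^{q+1}$ and set $\rho(X)=(\beta^qX+\alpha^q)/(\alpha X+\beta)$. The inequality guarantees that the determinant $\beta^{q+1}-\alpha^{q+1}$ is nonzero, so $\rho$ genuinely has degree one, and it also shows that the pole $-\beta/\alpha$ of $\rho$ does not lie in $\mu_{q+1}$, since $(-\beta/\alpha)^{q+1}=\beta^{q+1}/\alpha^{q+1}\ne 1$. For $\zeta\in\mu_{q+1}$ we have $\zeta^q=\zeta^{-1}$, while $\alpha^{q^2}=\alpha$ and $\beta^{q^2}=\beta$; substituting these into $\rho(\zeta)^q$ and simplifying yields $\rho(\zeta)^q\,\rho(\zeta)=1$, so $\rho(\zeta)\in\mu_{q+1}$. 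As $\rho$ is injective and $\mu_{q+1}$ is finite, $\rho$ permutes $\mu_{q+1}$.

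For the converse I count. The degree-one rational functions permuting $\mu_{q+1}$ form a group $\Gamma$, whose order I compute by conjugating to a familiar setting. Choosing $\delta\in\F_{q^2}\setminus\F_q$ with $\delta^q=-\delta$ (such $\delta$ exists for every $q$), the Cayley-type transform $\sigma(X)=(X+\delta)/(X-\delta)$ maps $\bP^1(\F_q)$ bijectively onto $\mu_{q+1}$; indeed, for $x\in\F_q$ the identity $x^q=x$ gives $\sigma(x)^q=\sigma(x)^{-1}$, and $\sigma(\infty)=1$. Conjugation by $\sigma$ identifies $\Gamma$ with the group of degree-one functions mapping $\bP^1(\F_q)$ to itself. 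The key point is that any such function sends the three $\F_q$-points $0,1,\infty$ to distinct $\F_q$-points, and a M\"obius transformation is determined over $\F_q$ by three such values; hence this group is exactly $\PGL_2(\F_q)$, of order $q^3-q$, and therefore $\lvert\Gamma\rvert=q^3-q$.

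Finally I count the functions of the stated form and match totals. Two admissible pairs $(\alpha,\beta)$ and $(\alpha',\beta')$ yield the same $\rho$ precisely when they differ by a scalar $c$; comparing the two rows of the matrix $\bigl(\begin{smallmatrix}\beta^q&\alpha^q\\\alpha&\beta\end{smallmatrix}\bigr)$ forces $c^{q-1}=1$, i.e.\ $c\in\F_q^*$. Since $\alpha\mapsto\alpha^{q+1}$ is the norm map $\F_{q^2}\to\F_q$, whose nonzero fibers each have size $q+1$, the number of pairs with $\alpha^{q+1}\ne\beta^{q+1}$ is $q(q-1)^2(q+1)$; dividing by the orbit size $q-1$ gives exactly $q^3-q$ distinct functions of the stated form. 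As these all lie in $\Gamma$ and $\lvert\Gamma\rvert=q^3-q$, the two sets coincide, completing the proof. The main obstacle is the converse count, and within it the crux is justifying that a degree-one function preserving the set $\bP^1(\F_q)$ is automatically defined over $\F_q$; the three-point argument settles this, and the Cayley transform is what transports the problem into that setting.
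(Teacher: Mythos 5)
The paper gives no proof of this lemma at all: it is stated as an immediate consequence of results in the cited reference [Z-Redei]. Your argument is therefore a genuinely self-contained alternative, and its overall architecture is sound: the forward verification via $\rho(\zeta)^q\rho(\zeta)=1$ is correct, the identification of the stabilizer of $\bP^1(\F_q)$ with $\PGL_2(\F_q)$ via the three-point argument is correct, and the two counts ($q(q-1)^2(q+1)$ admissible pairs, fibers of size exactly $q-1$ since a scalar $c$ relating two matrices must satisfy $c^{q-1}=1$, hence $q^3-q$ distinct functions, matching $\lvert\PGL_2(\F_q)\rvert$) are both right.

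There is, however, one step that fails as written: your claim that for every $q$ there exists $\delta\in\F_{q^2}\setminus\F_q$ with $\delta^q=-\delta$. In characteristic $2$ the condition $\delta^q=-\delta$ reads $\delta^q=\delta$, which forces $\delta\in\F_q$; so no such $\delta$ exists when $q$ is even, and your Cayley transform $\sigma(X)=(X+\delta)/(X-\delta)$ is unavailable there. The gap is local and easily repaired: for an arbitrary $\delta\in\F_{q^2}\setminus\F_q$ the map $\sigma(X)=(X-\delta^q)/(X-\delta)$ satisfies $\sigma(x)^q=(x-\delta)/(x-\delta^q)=\sigma(x)^{-1}$ for $x\in\F_q$ and $\sigma(\infty)=1$, so it carries $\bP^1(\F_q)$ bijectively onto $\mu_{q+1}$ in every characteristic (this is exactly the map $\theta^{-1}$ used later in the paper's proof of Theorem~\ref{bb-deg3}, via Lemma~\ref{mu}). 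With that substitution the rest of your conjugation and counting argument goes through unchanged. What your route buys over the paper's citation is a transparent, elementary proof; what it costs is that the counting gives no explicit inverse correspondence, whereas the cited source describes the bijection structurally.
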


\begin{lemma}\label{mu}
The degree-one rational functions in\/ $\mybar\F_q(X)$ which map\/ $\bP^1(\F_q)$ bijectively onto $\mu_{q+1}$ are $(\beta^q X+\alpha^q)/(\beta X+\alpha)$ with $\beta\in\F_{q^2}^*$ and $\alpha\in\F_{q^2}$ such that $\alpha/\beta\notin\F_q$.
The degree-one rational functions in\/ $\mybar\F_q(X)$ which map $\mu_{q+1}$ bijectively onto\/ $\bP^1(\F_q)$ are $(\gamma X+\gamma^q)/(\delta X+\delta^q)$ with $\delta\in\F_{q^2}^*$ and $\gamma\in\F_{q^2}$ such that $\gamma/\delta\notin\F_q$. 
\end{lemma}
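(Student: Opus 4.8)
The plan is to prove each of the two statements by a short direct verification that the proposed family lands in the correct target set, followed by a counting argument showing there can be no other degree-one maps with the stated property. I would treat the first statement (maps $\bP^1(\F_q)\to\mu_{q+1}$) in full, and then note that the second is entirely parallel (or follows by inverting the first).

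For the forward direction I would start from $\rho(X)=(\beta^qX+\alpha^q)/(\beta X+\alpha)$ with $\beta\in\F_{q^2}^*$ and $\alpha/\beta\notin\F_q$, and first observe that $\alpha/\beta\notin\F_q$ is equivalent to $\beta^q\alpha-\alpha^q\beta\ne 0$ (dividing the determinant by $\beta^{q+1}$ gives $(\alpha/\beta)-(\alpha/\beta)^q$), so $\rho$ has degree one. The key computation is that for $x\in\F_q$, where $x^q=x$, applying the $q$-power map to every coefficient and to the argument yields $\rho(x)^q=(\beta x+\alpha)/(\beta^qx+\alpha^q)=\rho(x)^{-1}$, using $\alpha^{q^2}=\alpha$ and $\beta^{q^2}=\beta$. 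A quick check shows no point of $\bP^1(\F_q)$ maps to $0$ or $\infty$ (either would force $\alpha/\beta\in\F_q$), so $\rho(x)\in\F_{q^2}^*$ with $\rho(x)^{q+1}=1$, and also $\rho(\infty)=\beta^q/\beta\in\mu_{q+1}$. Thus $\rho$ sends $\bP^1(\F_q)$ into $\mu_{q+1}$, and since $\rho$ is injective and both sets have $q+1$ elements, it is a bijection onto $\mu_{q+1}$.

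To show these are \emph{all} such maps, let $\mathcal M$ be the set of degree-one rational functions carrying $\bP^1(\F_q)$ bijectively onto $\mu_{q+1}$. Any such map is determined by the images of three points of $\bP^1(\F_q)$, all lying in $\F_{q^2}$, so $\mathcal M\subseteq\PGL_2(\F_{q^2})$. Fixing one member $\rho_0$ of the family above, every $\rho\in\mathcal M$ satisfies $\rho_0^{-1}\circ\rho\in\PGL_2(\F_q)$, since it permutes $\bP^1(\F_q)$ and is therefore defined over $\F_q$; hence $\mathcal M=\rho_0\circ\PGL_2(\F_q)$ and $\abs{\mathcal M}=\abs{\PGL_2(\F_q)}=q^3-q$. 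I would then count the family: the pairs $(\alpha,\beta)$ with $\beta\in\F_{q^2}^*$ and $\alpha/\beta\notin\F_q$ number $(q^2-1)(q^2-q)$, and two pairs give the same Möbius transformation exactly when their coefficient matrices are proportional. Because the matrices have the special shape in which the top row is the $q$-power of the bottom row, the proportionality scalar $\lambda$ must satisfy $\lambda^q=\lambda$, i.e.\ $\lambda\in\F_q^*$; so the fibers have size $q-1$ and the family contains $(q^2-1)(q^2-q)/(q-1)=q^3-q$ distinct functions. As the family lies in $\mathcal M$ and has the same cardinality, it equals $\mathcal M$.

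The second statement is handled by the identical pattern: for $z\in\mu_{q+1}$ one has $z^q=z^{-1}$, and the same coefficient-Frobenius calculation shows $(\gamma z+\gamma^q)/(\delta z+\delta^q)$ is fixed by the $q$-power map and hence lies in $\bP^1(\F_q)$, the degree-one condition again being $\gamma/\delta\notin\F_q$, with the same count $q^3-q$ matching $\abs{\PGL_2(\F_q)}$; alternatively this family is precisely the set of inverses of the first. The routine parts are the two Frobenius computations. The step that requires genuine care, and which I expect to be the main obstacle, is the counting: one must notice that the redundancy in the parametrization is by $\F_q^*$ rather than by all of $\F_{q^2}^*$, so that the fibers have size $q-1$. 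This is exactly what makes the family's cardinality equal $\abs{\PGL_2(\F_q)}$ and thereby forces the stated list to be complete.
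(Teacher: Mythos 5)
Your proof is correct, but it takes a genuinely different route from the paper. The paper does not prove this lemma from scratch: it presents it as an immediate consequence of Lemmas 2.1 and 3.1 of the cited reference \cite{Z-Redei}, which already classify the degree-one bijections between $\mu_{q+1}$ and $\bP^1(\F_q)$ in a slightly different normal form; the paper's (implicit) argument is just a reparametrization of that classification together with passing to inverse maps. You instead give a self-contained argument: a Frobenius computation ($\rho(x)^q=\rho(x)^{-1}$ for $x\in\bP^1(\F_q)$, respectively invariance under $z\mapsto z^q=z^{-1}$ on $\mu_{q+1}$) showing the stated family lands in the correct target, followed by a completeness argument identifying the full set of such maps with a coset $\rho_0\circ\PGL_2(\F_q)$ of cardinality $q^3-q$ and checking that your parametrization, whose fibers are $\F_q^*$-orbits of size $q-1$ rather than $\F_{q^2}^*$-orbits, has exactly that many members. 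All the steps check out, including the two points that need care: the determinant computation showing that $\alpha/\beta\notin\F_q$ (resp.\ $\gamma/\delta\notin\F_q$) is exactly the nondegeneracy condition, and the observation that proportional coefficient matrices of the special shape force the scalar into $\F_q^*$. What your approach buys is independence from the external reference and an argument readable without consulting \cite{Z-Redei}; what the paper's approach buys is brevity, since the heavy lifting (and in particular the completeness of the list) is outsourced to an already-published classification.
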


The following result is \cite[Thm.~1.3]{DZ-AA}.

\begin{lemma}\label{pp3}
A degree-three $h(X)\in\F_q(X)$ permutes\/ $\bP^1(\F_q)$ if and only if there exist degree-one $\rho,\eta\in\F_q(X)$ for which $\rho(h(\eta(X)))$ is one of the following:
\renewcommand{\theenumi}{\thethm.\arabic{enumi}}
\renewcommand{\labelenumi}{(\thethm.\arabic{enumi})}
\begin{enumerate}
\item\label{deg31} $X^3$ where $q\not\equiv 1\pmod 3$;
\item $\nu^{-1}\circ X^3\circ\nu$ where $q\equiv 1\pmod{3}$ and for some $\delta\in\F_{q^2}\setminus\F_q$ we have $\nu(X)=(X-\delta^q)/(X-\delta)$ and $\nu^{-1}(X)=(\delta X-\delta^q)/(X-1)$;
\item\label{deg33} $X^3-\alpha X$ where $3\mid q$ and $\alpha$ is a nonsquare in\/ $\F_q$.
\end{enumerate}
\end{lemma}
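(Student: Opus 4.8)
The plan is to study a degree-three $h(X)\in\F_q(X)$ through the geometry of the associated cover $h\colon \bP^1\to\bP^1$, exploiting the two-sided action of $\PGL_2(\F_q)$ by pre- and post-composition with degree-one maps in $\F_q(X)$. The sufficiency direction is quick: $X^3$ permutes $\bP^1(\F_q)$ exactly when $\gcd(3,q-1)=1$, i.e.\ when $q\not\equiv 1\pmod 3$, and $X^3-\alpha X$ (with $3\mid q$) permutes exactly when $\alpha$ is a nonsquare, both by Lemma~\ref{deg3} after noting that each fixes $\infty$; for the second normal form one verifies that $\nu$ carries $\bP^1(\F_q)$ bijectively onto the group $\mu_{q+1}$ of $(q+1)$-st roots of unity, so that $\nu^{-1}\circ X^3\circ\nu$ permutes $\bP^1(\F_q)$ iff $X^3$ permutes $\mu_{q+1}$, iff $\gcd(3,q+1)=1$, iff $q\equiv 1\pmod 3$. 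The work is in the converse.

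So suppose $h$ permutes $\bP^1(\F_q)$. First I would dispose of the inseparable case: if $h$ is inseparable then $\charp(\F_q)=3$ and $h\in\F_q(X^3)$, so $h=\rho\circ X^3$ for a degree-one $\rho\in\F_q(X)$, landing in \eqref{deg31}. Henceforth assume $h$ is separable, so that the geometric monodromy group $G$ of $h$ is a transitive subgroup of $S_3$; since $3$ is prime, $G$ is either $C_3$ or $S_3$.

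The crux is to rule out $G=S_3$. Here I would use the rational-function analogue of the double-transitivity criterion in Lemma~\ref{prim}: since $S_3$ is doubly transitive, the correspondence cutting out pairs $(x,y)$ with $x\ne y$ and $h(x)=h(y)$ — that is, the curve obtained from the numerator of $h(X)-h(Y)$ after removing the factor $X-Y$ — is geometrically irreducible. This curve has degree at most two in each of $X$ and $Y$, hence geometric genus at most $1$, so the Weil bound (the rational-function analogue of Lemma~\ref{Weil}) forces it to have at least $q+1-2\sqrt q$ points over $\F_q$. Discarding the $O(1)$ points lying over $\infty$ or on the diagonal, for all but finitely many $q$ there remains a genuine pair $x\ne y$ in $\bP^1(\F_q)$ with $h(x)=h(y)$, contradicting injectivity; the finitely many remaining $q$ are eliminated by a direct \textsf{Magma} search, exactly as in the proof of Theorem~\ref{second}. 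I expect this elimination of $S_3$ to be the main obstacle, both in securing the genus bound uniformly and in organizing the small-$q$ verification.

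It then remains to classify the case $G=C_3$, where $h$ is geometrically a cyclic cubic cover. If $\charp(\F_q)\ne 3$ this is a Kummer cover: $h$ has exactly two totally (tamely) ramified points $r_1,r_2$, mapping to branch points $b_1,b_2$, and over $\mybar\F_q$ it is $\PGL_2$-equivalent to $X^3$. Because $h$ is defined over $\F_q$, the $q$-power Frobenius permutes $\{r_1,r_2\}$ and $\{b_1,b_2\}$ compatibly, so it either fixes both pairs or swaps both. In the fixed case all four points are $\F_q$-rational, and choosing $\eta,\rho\in\F_q(X)$ sending $\{r_1,r_2\}$ and $\{b_1,b_2\}$ to $\{0,\infty\}$ reduces $h$ to $X^3$, which permutes iff $q\not\equiv 1\pmod 3$: this is \eqref{deg31}. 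In the swapped case $r_2=r_1^q=:\delta\in\F_{q^2}\setminus\F_q$, and $\nu(X)=(X-\delta^q)/(X-\delta)$ realizes $h$ as $\nu^{-1}\circ X^3\circ\nu$; since $\nu$ maps $\bP^1(\F_q)$ onto $\mu_{q+1}$, this permutes iff $q\equiv 1\pmod 3$, giving the second normal form. Finally, if $\charp(\F_q)=3$ and $h$ is separable then $h$ is an Artin--Schreier cover with a single (wildly) totally ramified point, which is $\F_q$-rational because it is unique; moving it to $\infty$ reduces $h$ to an additive polynomial, and after scaling to $X^3-\alpha X$ Lemma~\ref{deg3} shows it permutes iff $\alpha$ is a nonsquare, which is \eqref{deg33}. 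Matching the congruence conditions across the three cases — in particular that the fixed and swapped Kummer subcases are permutations under the complementary conditions $q\not\equiv 1$ and $q\equiv 1\pmod 3$ — then yields exactly the stated list.
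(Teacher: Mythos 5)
The first thing you should know is that the paper does not prove this lemma at all: it is quoted as \cite[Thm.~1.3]{DZ-AA}, so there is no internal proof to compare against, and any correct argument is by definition a different route. Your outline is essentially the standard way such classifications are proved, and I believe it can be completed: sufficiency via Lemma~\ref{deg3} and the bijection $\bP^1(\F_q)\to\mu_{q+1}$; the dichotomy $G\in\{C_3,S_3\}$ for separable $h$; elimination of $S_3$ by geometric irreducibility of the fibered-product curve (bidegree at most $(2,2)$ on $\bP^1\times\bP^1$, hence arithmetic genus at most $1$) together with the Weil bound for large $q$ and a finite computation for the remaining $q$; and a Kummer/Artin--Schreier analysis of the cyclic case, with Frobenius acting compatibly on the pair of ramification points and the pair of branch points. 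What this buys over the paper's approach is a self-contained proof; what the citation buys is brevity and no need to certify a computer search.

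Two steps need repair before your sketch is a proof. First, in the swapped Kummer case the assertion that $\nu(X)=(X-\delta^q)/(X-\delta)$ ``realizes $h$ as $\nu^{-1}\circ X^3\circ\nu$'' is not literally true: what the ramification data give is $h=\mu^{-1}\circ cX^3\circ\nu$, where $\mu(X)=(X-b^q)/(X-b)$ is built from the branch points (which in general differ from the ramification points) and $c$ is a constant; Frobenius-invariance of $h$ yields $c^{q+1}=1$, not $c=1$. You must then absorb $c$ and reconcile $\mu$ with $\nu$: since the permutation hypothesis forces $q\equiv 1\pmod 3$, cubing is a bijection of $\mu_{q+1}$, so $c=e^3$ with $e\in\mu_{q+1}$ and $h=\mu^{-1}\circ X^3\circ(e\nu)$; then, because any two degree-one maps carrying $\bP^1(\F_q)$ onto $\mu_{q+1}$ differ by precomposition with an element of $\PGL_2(\F_q)$ (a M\"obius map sending three $\F_q$-rational points to $\F_q$-rational points is itself $\F_q$-rational), both $e\nu$ and $\mu$ can be traded for a single $\nu'(X)=(X-\delta^q)/(X-\delta)$ at the cost of the allowed outer compositions $\rho,\eta\in\F_q(X)$. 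The same (easier) normalization, absorbing a constant in $\F_q^*$, is needed in the fixed case. Second, the deferred computation must be stated correctly: for the finitely many small $q$ it does not suffice to rule out a special family of polynomials as in the proof of Theorem~\ref{second}; you must verify the full classification statement (every permuting degree-three $h\in\F_q(X)$, up to the two-sided $\PGL_2(\F_q)$-action, is one of the three forms). That search is finite and feasible, but it is a different and larger verification than the one you point to.
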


\begin{proof}[Proof of Theorem~\ref{bb-deg3}]
The ``if'' implication follows from Lemma~\ref{deg3}.
It remains to prove the ``only if'' implication.
Thus, we assume in what follows that $f(X)$ permutes $\F_{q^2}$, so in particular $a,b,c,d$ are not all zero.
Write $B(X)\colonequals aX^3+bX^2+cX+d$, and write $\widehat B(X)\colonequals d^qX^3+c^qX^2+b^qX+a^q$, so that $\widehat B(X)=X^3 B^{(q)}(X^{-1})$. Let $C(X)\colonequals\gcd(B(X),\widehat B(X))$, where we may assume that $C(X)$ is monic. Write $g(X)\colonequals \widehat B(X) / B(X)$. 
By Lemmas~\ref{old} and \ref{lemx}, the hypothesis that $f(X)$ permutes $\F_{q^2}$ implies that $q\not\equiv 1\pmod 3$, $B(X)$ has no roots in $\mu_{q+1}$, and $g(X)$ permutes $\mu_{q+1}$. Since $B(X)$ has no roots in $\mu_{q+1}$, also $C(X)$ has no roots in $\mu_{q+1}$.

First suppose that $C(X)$ has a root $\gamma\in\mybar\F_q^*$. Then $0=B(\gamma)^q=B^{(q)}(\gamma^q)=\gamma^{3q}\widehat B(\gamma^{-q})$, so that $\widehat B(\gamma^{-q})=0$, and likewise $0=\widehat B(\gamma)^q=\gamma^{3q} B(\gamma^{-q})$ implies $B(\gamma^{-q})=0$. 
Thus $C(\gamma^{-q})=0$, and we must have $\gamma^{-q}\ne\gamma$ since $C(X)$ has no roots in $\mu_{q+1}$. Hence $\gamma$ and $\gamma^{-q}$ are distinct roots of $C(X)$. 
Since $\deg(g)=\max(\deg(B),\deg(\widehat B))-\deg(C)\le 3-\deg(C)$, and $g(X)$ must be nonconstant because it permutes $\mu_{q+1}$, we conclude that $C(X)=(X-\gamma)(X-\gamma^{-q})$.

Now suppose that $\deg(g)<3$. We claim that $B(X) = (\alpha X + \beta) (X - \gamma) (\gamma^q X - 1)$ for some $\alpha,\beta,\gamma\in\F_{q^2}$ with $\gamma\notin\mu_{q+1}$. If $\{a,d\}=\{0\}$ then the claim holds with $\alpha\colonequals -b$, $\beta\colonequals -c$, and $\gamma\colonequals 0$. 
If $\{a,d\}\ne\{0\}$ then $C(0)\ne 0$ and $\max(\deg(B)\deg(\widehat B))=3$, so that $C(X)$ has a root $\gamma\in\mybar\F_q^*$ and thus the previous paragraph yields $C(X)=(X-\gamma)(X-\gamma^{-q})$ and $\gamma\notin\mu_{q+1}$, which implies the claim since $C(X)$ divides $B(X)$. 
Thus the claim holds in every case. It is easy to check that $f(X)\equiv M(X)\circ X^{q+2}\circ L(X)\pmod{X^{q^2}-X}$ where $L(X)\colonequals X^q-\gamma X$ and $M(X)\colonequals \bigl((\alpha\gamma+\beta)X^q+(\alpha+\gamma^q \beta)X\bigr)/(\gamma^{q+1}-1)$.
%
%
Plainly $L(X)$ and $M(X)$ are $\F_q$-linear maps $\F_{q^2}\to\F_{q^2}$, and they are bijective since $f(X)$ is bijective. Thus \eqref{541} holds.

Henceforth suppose that $\deg(g)=3$. Pick any $z\in\F_{q^2}\setminus\F_q$. By Lemma~\ref{mu}, $\theta(X)\colonequals (zX-z^q)/(X-1)$ defines a bijection from $\mu_{q+1}$ onto $\bP^1(\F_q)$, and $\theta^{-1}(X)\colonequals (X-z^q)/(X-z)$ defines the inverse bijection from $\bP^1(\F_q)$ onto $\mu_{q+1}$. 
Write $h(X)\colonequals \theta(X) \circ g(X) \circ \theta^{-1}(X)$, so that $\deg(h)=3$ and $h(X)$ permutes $\bP^1(\F_q)$. It is easy to check that $h^{(q)}(X)=h(X)$, so that $h(X)\in\F_q(X)$.
%
%
Since $q\not\equiv 1\pmod 3$, by Lemma~\ref{pp3} there exist degree-one $\widehat\rho,\widehat\eta\in\F_q(X)$ for which $\widehat\rho(h(\widehat\eta(X)))$ is either \eqref{deg31} or \eqref{deg33}. 
It follows that there exist degree-one $\rho,\eta\in\F_{q^2}(X)$ such that $\rho^{-1}(g(\eta^{-1}(X)))$ is either \eqref{deg31} or \eqref{deg33}, where in addition $\rho(\bP^1(\F_q))=\mu_{q+1}$ and $\eta(\mu_{q+1})=\bP^1(\F_q)$. 
By Lemma~\ref{mu}, we have $\rho(X)=(\beta^q X+\alpha^q)/(\beta X+\alpha)$ and $\eta(X)=(\gamma X+\gamma^q)/(\delta X+\delta^q)$ for some $\alpha,\beta,\gamma,\delta\in\F_{q^2}$ such that $\beta,\delta\ne 0$ and $\alpha\beta^{-1},\gamma\delta^{-1}\notin\F_q$. 
Let $\widetilde\rho\colon\F_q\times\F_q\to\F_{q^2}$ and $\widetilde\eta\colon\F_{q^2}\to\F_q\times\F_q$ map $\widetilde\rho\colon (x,y)\mapsto \beta x+\alpha y$ and $\widetilde\eta\colon x\mapsto (\gamma x^q+\gamma^q x, \delta x^q+\delta^q x)$, so that $\widetilde\rho$ and $\widetilde\eta$ are $\F_q$-linear. 
We have $g(X)=\rho(X)\circ (X^3-eX)\circ\eta(X)$, where if $e\ne 0$ then $3\mid q$ and $e$ is a nonsquare in $\F_q$. Then it is easy to check that there is some $\varepsilon\in\F_{q^2}^*$ for which $\varepsilon\cdot f(X)$ induces the same function on $\F_{q^2}$ as does $\widetilde\rho\circ (X^3-eXY^2,Y^3)\circ \widetilde\eta$.
%
%
Since bijectivity of $f(X)$ implies bijectivity of $\varepsilon^{-1}\widetilde\rho$ and $\widetilde\eta$, this yields \eqref{542} if $e=0$ and \eqref{543} if $e\ne 0$.
\end{proof}


\section{Complete mappings}

In this section we prove Theorems~\ref{cm-bb-deg3} and \ref{cmthm}.

\begin{proof}[Proof of Theorem~\ref{cm-bb-deg3}]
We first prove the ``if'' implication. If $\gamma\in\F_{q^2}^*$ satisfies $\gamma^{2q-2}-\gamma^{q-1}+1=0$ then $q\not\equiv 1\pmod 3$, and $\gamma X^{q+2}$ is a complete mapping of $\F_{q^2}$ by \cite[Cor.~3.4]{Z-subfields}. 
Since $\F_q$-linear conjugacy preserves the complete mapping property, it follow that the polynomials $f(X)$ in \eqref{cm1} are complete mappings. 
If \eqref{cmadd} holds then both $f(X)$ and $f(X)+X$ induce homomorphisms from the additive group of $\F_{q^2}$ to itself, so that they permute $\F_{q^2}$ if and only if these homomorphisms have trivial kernel. 
The kernel of $f(X)$ is trivial because $a^{q+1}\ne d^{q+1}$, and the kernel of $f(X)+X$ is trivial because $(f(X)+X)/X$ has no roots in $\F_{q^2}^*$ by hypothesis. Thus $f(X)$ is a complete mapping of $\F_{q^2}$.

It remains to prove the ``only if'' implication. Henceforth we suppose that $f(X)$ is a complete mapping of $\F_{q^2}$. In particular, $f(X)$ permutes $\F_{q^2}$, so Theorem~\ref{bb-deg3} implies that $f(X)$ if $\F_q$-linearly equivalent to one of \eqref{541}--\eqref{543}.

First suppose that $f(X)$ is \eqref{541} up to $\F_q$-linear equivalence. Thus $q\not\equiv 1\pmod 3$ and $\rho\circ f(X)\circ\eta = X^{q+2}$ as maps on $\F_{q^2}$ for some automorphisms $\rho$ and $\eta$ of $\F_{q^2}$ as an $\F_q$-vector space.
Since $\rho\circ\eta$ is an $\F_q$-vector space automorphism of $\F_{q^2}$, there are $\alpha,\beta\in\F_{q^2}$ with $\alpha^{q+1}\ne \beta^{q+1}$ such that $\rho\circ\eta=\alpha X^q+\beta X$ as maps on $\F_{q^2}$. 
It follows that $\rho \circ (f(X)+X) \circ \eta  = X^{q+2} + \alpha X^q + \beta X$ as maps on $\F_{q^2}$. 
Since $f(X)+X$ permutes $\F_{q^2}$, also $X^{q+2} + \alpha X^q + \beta X$ permutes $\F_{q^2}$, which by Theorem~\ref{first} implies that one of the following holds:
\begin{enumerate}
\item $q\not\equiv 1\pmod 3$, $\alpha=0$, and $\beta^{q-1}$ is a root of $X^3-X^2+X$;
\item $q=2$, $\alpha\ne 0$, and $\beta=1$.
\end{enumerate}
Since $\alpha^{q+1}\ne\beta^{q+1}$, it follows that (1) holds and $\beta\ne 0$. Thus $\beta^{q-1}$ is a root of $X^2-X+1$. Since $\rho=\beta\eta^{-1}$, it follows that $\eta^{-1}\circ f(X)\circ\eta = \beta^{-1} X^{q+2}$ as maps on $\F_{q^2}$, which gives \eqref{cm1}.  

Next suppose that $f(X)$ is \eqref{542} up to $\F_q$-linear equivalence. Thus $q\not\equiv 1\pmod 3$ and $\rho\circ f(X)\circ\eta^{-1} = (X^3,Y^3)$ as maps on $\F_q\times\F_q$ for some $\F_q$-vector space isomorphisms $\rho$ and $\eta$ from $\F_{q^2}$ to $\F_q\times\F_q$. 
Since $\rho\circ\eta^{-1}$ is an $\F_q$-vector space automorphism of $\F_q\times\F_q$, there exists $\Bigl[ \begin{matrix} \alpha & \beta \\ \gamma & \delta \end{matrix} \Bigr] \in \GL_2(\F_q)$ such that $\rho\circ\eta^{-1}$ sends $(x,y)$ to $(\alpha x+\beta y,\gamma x+\delta y)$ for any $x,y\in\F_q$. 
It follows that $\rho \circ (f(X)+X) \circ \eta^{-1} = (X^3+\alpha X+\beta Y,Y^3+\gamma X+\delta Y)$ as maps on $\F_q\times\F_q$. 
Since $f(X)+X$ permutes $\F_{q^2}$, also $(X^3+\alpha X+\beta Y,Y^3+\gamma X+\delta Y)$ permutes $\F_q\times\F_q$, so Theorem~\ref{second} implies that one of the following holds:
\begin{enumerate}
\item $q\equiv 0\pmod 3$, $\beta\gamma=0$, and $-\alpha$ and $-\delta$ are nonsquares in $\F_q$;
\item $q\equiv 0\pmod 3$, $\beta\gamma\ne 0$, and no square in $\F_q$ is a root of the polynomial $X^4 + (\alpha^3+\beta^2\delta)X + \beta^2(\alpha\delta-\beta\gamma)$;
\item $q=2$, $\beta=\gamma=1$, and $\alpha+\delta=1$.
\end{enumerate}
If (3) holds then it is easy to check that $f(X)$ is $\F_q$-linearly conjugate to $wX$ with $w\in\F_4\setminus\F_2$, so that \eqref{cm1} holds.
%
%
Henceforth we suppose that either (1) or (2) holds. Then $q\equiv 0\pmod 3$, so the map $\rho^{-1}\circ(X^3,Y^3)\circ\eta$ on $\F_{q^2}$ is the cube of an $\F_q$-linear automorphism of $\F_{q^2}$, and hence is induced by a polynomial whose terms have degrees in $\{3,3q\}$. 
Thus $f(X)=aX^{3q}+dX^3$. Since $f(X)$ permutes $\F_{q^2}$, we have $a^{q+1}\ne d^{q+1}$. Since $f(X)+X$ permutes $\F_{q^2}$, the polynomial $(f(X)+X)/X=aX^{3q-1}+dX^2+1$ has no roots in $\F_{q^2}^*$, so that \eqref{cmadd} holds.

Finally, suppose that $f(X)$ is \eqref{543} up to $\F_q$-linear equivalence. Thus $q\equiv 0\pmod 3$ and $\rho\circ f(X)\circ\eta^{-1} = (X^3-eXY^2,Y^3)$ as maps on $\F_q\times\F_q$, where $e$ is a nonsquare in $\F_q^*$ and $\rho$ and $\eta$ are $\F_q$-vector space isomorphisms $\F_{q^2}\to \F_q\times\F_q$. 
Since $\rho\circ\eta^{-1}$ is an $\F_q$-vector space automorphism of $\F_q\times\F_q$, there exists $\Bigl[ \begin{matrix} \alpha & \beta \\ \gamma & \delta \end{matrix} \Bigr] \in \GL_2(\F_q)$ such that $\rho\circ\eta^{-1}$ sends $(x,y)$ to $(\alpha x+\beta y,\gamma x+\delta y)$ for any $x,y\in\F_q$.
It follows that $\rho \circ (f(X)+X) \circ \eta^{-1} = (X^3-eXY^2+\alpha X+\beta Y,Y^3+\gamma X+\delta Y)$ as maps on $\F_q\times\F_q$. 
Since $f(X)+X$ permutes $\F_{q^2}$, also $(X^3-eXY^2+\alpha X+\beta Y,Y^3+\gamma X+\delta Y)$ permutes $\F_q\times\F_q$, which is impossible by Theorem~\ref{third} since $e$ is a nonsquare in $\F_q$ and $\alpha\delta\ne\beta\gamma$.
\end{proof}

We conclude this paper by proving Theorem~\ref{cmthm}.

\begin{proof}[Proof of Theorem~\ref{cmthm}]
First we prove the result when $q=2$. If $q=2$ then $f(X)\equiv (a+d)X^3+bX^2+cX\pmod{X^4+X}$; since the only permutation polynomials over $\F_4$ of degree at most $3$ which have a degree-$1$ term are the degree-$1$ polynomials, we see that $f(X)$ is a complete mapping over $\F_4$ if and only if \eqref{33} holds. 
If $q=2$ then \eqref{31},\eqref{34}, and \eqref{35} do not hold, and each of \eqref{30} and \eqref{32} implies \eqref{33}.
Thus the result is true when $q=2$, so we assume henceforth that $q>2$.

By Theorem~\ref{cm-bb-deg3}, $f(X)$ is a complete mapping of $\F_{q^2}$ if and only if either \eqref{cm1} or \eqref{cmadd} holds. 
Since \eqref{cmadd} appears in the conclusion of Theorem~\ref{cmthm}, it remains to determine the possibilities for $a,b,c,d$ when \eqref{cm1} holds. By definition, \eqref{cm1} holds if and only if
\[
f(X) \equiv \frac{\lambda X^q-\beta^qX}{\lambda^{q+1}-\beta^{q+1}}\circ \gamma X^{q+2} \circ (\lambda X^q+\beta X) \pmod{X^{q^2}-X}
\]
for some $\lambda,\beta,\gamma\in\F_{q^2}$ such that $\lambda^{q+1}\ne \beta^{q+1}$ and $\gamma^{2q-2}-\gamma^{q-1}+1=0$. Since $q>2$, the monomials $X^{3q}$, $X^{2q+1}$, $X^{q+2}$, and $X^3$ are pairwise incongruent mod $X^{q^2}-X$. 
Thus \eqref{cm1} holds if and only if there exist $\lambda,\beta,\gamma\in\F_{q^2}$ such that all of the following hold:
\begin{enumerate}
\item $\lambda^{q+1}\ne \beta^{q+1}$,
\item $\omega\colonequals -\gamma^{q-1}$ satisfies $\omega^2+\omega+1=0$,
\item $a(\lambda^{q+1}-\beta^{q+1})=\lambda^2 \beta^{2q} (\gamma^q-\gamma)$,
\item $b(\lambda^{q+1}-\beta^{q+1})=\lambda\beta^{2q+1}(\gamma^q-2\gamma)+\lambda^{q+2}\beta^q(2\gamma^q-\gamma)$,
\item $c(\lambda^{q+1}-\beta^{q+1})=2\lambda^{q+1}\beta^{q+1}(\gamma^q-\gamma)+\lambda^{2q+2}\gamma^q-\beta^{2q+2}\gamma$,
\item $d(\lambda^{q+1}-\beta^{q+1})=\lambda^{2q+1}\beta\gamma^q-\lambda^q\beta^{q+2}\gamma$.
\end{enumerate}
We may assume that $q\not\equiv 1\pmod 3$, since this condition follows from (2) and also appears in the conclusion of Theorem~\ref{cmthm}. If $\lambda=0$ and (2) holds then (1)--(6) hold if and only if $\beta\ne 0=a=b=d$ and $c=\beta^{q+1}\gamma$. 
If $\beta=0$ and (2) holds then (1)--(6) hold if and only if $\lambda\ne 0=a=b=d$ and $c=\lambda^{q+1}\gamma^q$. Thus (1)--(6) hold with $\lambda\beta=0$ if and only if \eqref{30} holds.

We now show that \eqref{31} holds if and only if $\gcd(q,6)=1$ and (1)--(6) hold with $\lambda\beta\ne 0$. It is straightforward to check that if $\gcd(q,6)=1$ and (1)--(6) hold with $\lambda\beta\ne 0$ then \eqref{31} holds.
%
%
Conversely, suppose \eqref{31} holds.
Then $24a^2d\ne 0$, so that $\gcd(q,6)=1$.
Pick any $\gamma\in\F_{q^2}^*$ for which $\omega\colonequals -\gamma^{q-1}$ has order $3$.
Then it it routine to verify that
\[
\delta\colonequals \frac{-\omega^2(b+3d^q)\cdot\bigl(b+(1+2\omega)d^q\bigr)}{12a\gamma}
\]
is in $\F_q^*$.
%
%
Let $\beta$ be any element of $\F_{q^2}^*$ such that $\beta^{q+1}=\delta$, and put
\[
\lambda \colonequals \frac{2(1-\omega)a\beta}{b+(1+2\omega)d^q}.
\]
Then $\lambda\in\F_{q^2}^*$, and it is routine to verify that (1)--(6) hold.
%
%

Now suppose that $3\mid q$. Then (1)--(6) imply $b=0$, and also each of \eqref{34} and \eqref{35} implies $b=0$. If (1)--(6) hold with $\lambda,\beta\ne 0$ then $\gamma^{q-1}=-1$, and $d$ equals $0$ if and only if $\lambda^{q+1}=-\beta^{q+1}$, in which case it is easy to check that \eqref{34} holds.
%
%
Conversely, if \eqref{34} holds then $\gamma\colonequals c$ satisfies $\gamma^{q-1}=-1$, so (2) holds. Pick any $\beta\in\F_{q^2}^*$ with $\beta^{q+1}=-1$. 
By hypothesis, $(-a/c)^{(q^2-1)/2}=1$, so that $-a/c$ is a square in $\F_{q^2}^*$ and thus we may choose $\lambda\in\F_{q^2}^*$ with $(\lambda/\beta)^2=-a/c$. Then it is easy to check that (1)--(6) hold.
%
%
If (1)--(6) hold with $\lambda,\beta,d\ne 0$ then it is easy to check that \eqref{35} holds, where $d^{4q+4}+a^4d^{q+5}$ is the square of $\gamma^4 (\lambda\beta)^{2q+2} (\beta^{q+1}+\lambda^{q+1})^3/(\beta^{q+1}-\lambda^{q+1})^3$.
%
%
Conversely, if \eqref{35} holds then put $\gamma\colonequals 1/(ad^2)$, so that (2) holds. Let $\alpha$ be a square root of $d^{4q+4}+a^4 d^{q+5}$, and put $\delta\colonequals c\gamma^{-1} + \alpha$. 
Then $\delta\in\F_q^*$, so we may choose $\lambda\in\F_{q^2}^*$ with $\lambda^{q+1}=\delta$, and then put $\beta\colonequals \lambda(acd^2-\delta-d^{2q+2})/(ad^{q+2})$. It is easy to check that (1)--(6) hold.
%

Finally, suppose $q$ is even. It is routine to verify that (1)--(6) imply \eqref{32}.
%
%
Conversely, suppose \eqref{32} holds. If $b=0$ then one can check that (1)--(6) hold for $\lambda=0$, $\beta=1$, and $\gamma=c$.
%
%
Finally, if $b\ne 0$ then pick any $\omega\in\F_4\setminus\F_2$. 
It is easy to check that (1)--(6) hold for $\gamma^2=\omega c^{2q}+\omega^2 c^{q+1}+c^2$, $\beta=1$, and $\lambda=(c+\gamma)/b^q$. This concludes the proof.
\end{proof}




\end{document}